\newtheorem{thm}{Theorem}
 \newtheorem{conj}[thm]{Conjecture}
 \newtheorem{lem}[thm]{Lemma}
\newcommand{\x}{{\bf x}}
\newcommand{\y}{{\bf y}}
\newcommand{\1}{{\bf 1}}
\newcommand{\0}{{\bf 0}}
\newcommand{\rk}{{\rm rank}}
\newcommand{\col}{{\rm Col}}
\newcommand{\nul}{{\rm Nul}}
\newcommand{\F}{{\cal F}}
\newcommand{\g}{\gamma}
\newcommand{\ep}{\epsilon}
\title{\bf\Large  Maximal graphs with respect to rank}
\author{\large H. Esmailian$^{\,\rm a}$ \quad  E. Ghorbani$^{\,\rm a,}$\thanks{Corresponding author} \quad  S. Hossein Ghorban$^{\,\rm b}$ \quad   G.B. Khosrovshahi$^{\,\rm c}$\\[.3cm]
{\sl $^{\rm a}$Department of Mathematics, K. N. Toosi University of Technology,}\\
{\sl P. O. Box 16765-3381, Tehran, Iran}\\
{\sl $^{\rm b}$School of Computer Science, Institute for Research in Fundamental
Sciences (IPM),}\\{\sl P. O. Box 19395-5746, Tehran, Iran }\\
{\sl $^{\rm c}$School of Mathematics, Institute for Research in Fundamental
Sciences (IPM),}\\{\sl P. O. Box 19395-5746, Tehran, Iran }
\\[.3cm]{
{\tt\{h.esmailian,\,e\_ghorbani,\,s.hosseinghorban,\,rezagbk\}@ipm.ir}}}
\begin{document}
\maketitle
\vspace{5mm}
\begin{abstract}
The rank of a graph is defined to be the rank of its adjacency matrix.
A graph is called  reduced if it has no isolated vertices and no two vertices  with the same set of neighbors.
A reduced graph $G$ is said to be maximal if any reduced graph containing $G$ as a proper induced subgraph has a higher rank.
The main intent of this paper is to present some results on maximal graphs. First, we introduce a characterization of maximal trees (a reduced tree is a maximal tree if it is not a proper subtree of a reduced tree with the same rank).
Next, we  give a near-complete characterization of maximal `generalized friendship graphs.' Finally, we present an enumeration of all  maximal graphs with ranks $8$ and $9$. The ranks up to $7$ were already done by Lepovi\'c (1990), Ellingham (1993), and Lazi\'c (2010).
\vspace{3mm}\\

\noindent {\bf Keywords:}  Rank,  Maximal graph, Maximal tree, Generalized friendship graph  \\[.1cm]
\noindent {\bf AMS Mathematics Subject Classification\,(2010):}   05C50, 05C05, 15A03.
\end{abstract}
\vspace{5mm}
\section{Introduction}

Let $G$ be a simple graph with vertex set $\{v_1, \ldots , v_n\}$.  The {\em adjacency matrix} of $G$ is an $n \times  n$
matrix $A(G)$ whose $(i, j)$-entry is $1$ if $v_i$ is adjacent to $v_j$ and  $0$ otherwise.
The number of vertices of $G$ is the {\em order} of $G$.
The {\em rank} of  $G$, denoted by $\rk(G)$,  is the  rank of $A(G)$.
We say that  $G$ is {\em reduced} if it has no isolated vertex  and  no two vertices  with the same set of neighbors.
In the literature, reduced graphs are also known as {\em canonical graphs}  \cite{laz,lep,tor1,tor2}.
There are only finitely many reduced graphs of rank $r$ since the order of such graphs are at most $2^r -1$  \cite{ack,ell}.
A natural question is:
 what is the maximum order  of
a reduced graph with a given rank $r$.
Kotlov and   Lov\'asz \cite{kl} answered this question asymptotically. They proved that the maximum order of such graph is $O(2^{r/2})$.
Later on, Akbari, Cameron, and  Khosrovshahi \cite{ack} made the following conjecture on the exact value of the maximum order.

\begin{conj}\label{conj}  For every integer $r\geqslant2$, the maximum order of any  reduced  graph of  rank $r$
is equal to
$$n(r)=\left\{\begin{array}{ll}
   2\cdot2^{r/2}-2 & \hbox{if $r$ is even}, \\
    5\cdot2^{(r-3)/2}-2 & \hbox{if $r>1$ is odd}.
\end{array}\right.$$
\end{conj}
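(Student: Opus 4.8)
The statement is a conjecture, so what follows is a strategy rather than a finished argument; I will indicate exactly where it stops short of a proof. The first thing I would record is the recursion hidden in the formula: writing $n(r)=2^{(r+2)/2}-2$ for even $r$ and $n(r)=5\cdot 2^{(r-3)/2}-2$ for odd $r$, one checks directly that
$$n(r+2)=2\,n(r)+2\qquad(r\ge 2),$$
with base values $n(2)=2$ and $n(3)=3$. This suggests proving the conjecture by induction on $r$ in steps of two, split into the usual lower and upper bounds. For the lower bound it suffices to exhibit, for each $r$, a reduced graph of rank exactly $r$ and order $n(r)$; for the upper bound one must show every reduced graph of rank $r$ has order at most $n(r)$. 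The whole difficulty lies in the upper bound.

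\textbf{Lower bound.} I would verify the two base cases by hand ($K_2$ for $r=2$, $K_3$ for $r=3$) and then look for a doubling operation $G\mapsto G'$ that sends a reduced graph of rank $r$ and order $n(r)$ to a reduced graph of rank $r+2$ and order $2\,n(r)+2$. The natural candidate is to take two copies of $G$, join them across by a suitable $0/1$ pattern, and attach two new vertices playing the role of the extra ``$+2$''. One then checks that (i) the two new vertices raise the rank by exactly $2$, using the block structure of the adjacency matrix, and (ii) no two vertices acquire the same neighborhood, so that $G'$ stays reduced. Matching the extremal constructions already implicit in \cite{ack,kl} should make this routine; the only care needed is the parity bookkeeping distinguishing the even and odd formulas.

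\textbf{Upper bound.} Since $A=A(G)$ is symmetric of rank $r$, it has a nonsingular $r\times r$ principal submatrix; let $S$ be the corresponding set of $r$ vertices, so $A_S:=A[S,S]$ is invertible and $G[S]$ is reduced of rank $r$. Every vertex $w$ has its whole row determined by its neighborhood $\chi_w\in\{0,1\}^r$ inside $S$, because $A[w,\cdot]=\chi_w^{\!\top}A_S^{-1}A[S,\cdot]$. Distinct vertices must then have distinct $\chi_w$ (else they would be false twins), which already gives the weak bound $|V(G)|\le 2^r$. To sharpen it I would exploit the symmetry of $A$: for any two vertices $w,x$,
$$A[w,x]=\chi_w^{\!\top}A_S^{-1}\chi_x\in\{0,1\},\qquad \chi_w^{\!\top}A_S^{-1}\chi_w=0.$$
Thus the admissible neighborhood vectors form a family of $0/1$ vectors on which the fixed symmetric form $M:=A_S^{-1}$ takes only the values $0$ and $1$ and is self-annihilating. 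This set of vectors is essentially the vertex set of $G$ again, so the configuration is self-similar, and the problem reduces to bounding the number of $0/1$ vectors realizing a $\{0,1\}$-valued, zero-diagonal symmetric form in $r$ variables. Kotlov and Lov\'asz \cite{kl} analyse precisely this kind of structure to obtain $O(2^{r/2})$; the task would be to run such an analysis inductively so that each reduction of the rank by $2$ costs exactly a factor $2$ and an additive $2$, matching the recursion above.

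The main obstacle---and the reason this remains a conjecture---is that last step. Passing from the correct order of magnitude $2^{r/2}$ to the exact leading constant (the factor $2$ for even rank and $5\cdot 2^{-3/2}$ for odd rank) and to the precise additive term $-2$ requires a tight classification of the near-extremal configurations: one must understand which cores $S$ and which families of admissible $\chi_w$ can even approach the bound, and rule out everything that would overshoot it. Concretely, I would try to choose a single well-placed vertex (or a matched pair of vertices) whose deletion drops the rank by $2$ and partitions the remaining vertices into at most two reduced pieces of rank $r$ plus a bounded remainder; controlling the creation of twins under this deletion, across the even/odd parity split, is the delicate combinatorial heart of the problem and is where I expect the argument to be hardest to complete.
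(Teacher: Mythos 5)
The statement you were asked to prove is Conjecture~\ref{conj}: it is the Akbari--Cameron--Khosrovshahi conjecture, quoted by the paper from \cite{ack}, and the paper contains no proof of it. The authors only record the known partial results: Kotlov and Lov\'asz \cite{kl} proved the asymptotic upper bound $O(2^{r/2})$, and Ghorbani, Mohammadian and Tayfeh-Rezaie \cite{gmt1} showed that any counterexample would have rank at most $47$ and that every reduced graph of rank $r$ has order at most $8n(r)+14$. So there is no proof in the paper to compare yours against, and your decision to present a strategy rather than claim a theorem is the correct reading of the situation.

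Within that strategy, what you write is sound but, as you yourself acknowledge, not a proof. The recursion $n(r+2)=2n(r)+2$ is correct for both parities, the base cases $K_2$ and $K_3$ are right, and the lower-bound half (exhibiting reduced graphs of rank $r$ and order $n(r)$) is indeed the routine half; such constructions are known and are what makes $n(r)$ the conjectured value rather than a mere guess. Your upper-bound machinery --- choosing a nonsingular $r\times r$ principal submatrix $A_S$, encoding each vertex by its neighborhood vector $\chi_w$ inside $S$, and using $A[w,x]=\chi_w^{\top}A_S^{-1}\chi_x\in\{0,1\}$ together with the zero diagonal --- is also correct, and it is exactly the framework of \cite{kl} and \cite{gmt1}; it yields $|V(G)|\leqslant 2^r-1$ and, with more work, $O(2^{r/2})$. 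But the step you flag as the obstacle is not a technical refinement that one could expect to ``run inductively'': converting the correct order of magnitude into the exact value $n(r)$, i.e., pinning down the leading constants and the additive $-2$, is precisely the open problem, and nothing in your outline (nor in the literature the paper cites) supplies the classification of near-extremal configurations this would require. In short: no error and no false claim, but the gap you name is the conjecture itself, so the proposal cannot be counted as a proof --- it is an accurate summary of why the statement is still open.
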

Ghorbani,  Mohammadian, and  Tayfeh-Rezaie \cite{gmt1} showed  that if   Conjecture~\ref{conj} is not true, then there would be a  counterexample  of rank  at most $47$. They also showed that the order of every  reduced graph of  rank $r$  is at most $8n(r)+14$. The maximum order of graphs with a fixed rank within the families of trees, bipartite graphs and triangle-free graphs were determined  \cite{gmt,gmt2}.

In  this paper, we consider maximal graphs with respect to rank.
A reduced graph $G$ is called {\em maximal} if it is not a proper induced subgraph of a reduced graph with the same rank as $G$. In other words, $G$ is maximal if for any reduced graph $H$ such that $G$ is obtained by removing a vertex form $H$, one has $\rk(H)>\rk(G)$.
Note that the graphs attaining the maximum order in Conjecture~\ref{conj} would be necessarily maximal.
Maximal graphs can also be considered within a specific family of graphs. Let $\F$ be a given family of graphs.
A reduced graph $G\in\F$ is called {\em maximal within $\F$} if for any reduced graph $H\in\F$ such that $G$ is obtained by removing a vertex form $H$, we have $\rk(H)>\rk(G)$. In the classification of graphs with respect to the rank,  maximal graphs are central objects, since any reduced graph of rank $r$ is an induced subgraph of a maximal graph with rank $r$. This remains valid for maximal graphs within a specific family of graphs.
In the paper, we consider both maximal graphs in its general sense (in Sections~\ref{sec:graphs} and \ref{sec:rank8})  and maximal graphs within the family of trees (in Section~\ref{sec:tree}).

In~\cite{gmt}, a characterization of maximal trees (i.e. maximal graphs within the family of trees) is reported. In Section~\ref{sec:tree}, we show that the characterization of \cite{gmt} is not exhaustive and we present a complete characterization of maximal trees. In fact, there is one more construction of such trees which is missing in \cite{gmt}. Ellingham~\cite{ell} presented some families of maximal graphs and  characterized  maximal friendship graphs. In Section~\ref{sec:graphs}, we present a near-complete characterization of maximal `generalized friendship graphs.' All maximal graphs of rank up to 7 were presented in \cite{ell} and independently in \cite{tor1,tor2,lep,laz}.  We continue this line of work by constructing all  maximal graphs of rank $8$ and $9$. A report on this construction is given in Section~\ref{sec:rank8}.

\section{Maximal trees}\label{sec:tree}
A vertex with degree one is called {\em pendant}. A vertex adjacent to a pendant vertex is said to be {\em pre-pendant}.
A tree is reduced if it has no two pendant vertices with the same neighbor.
A {\em maximal tree} is a tree which is maximal within the family of trees, i.e.
  it is not a proper subgraph of a reduced tree with the same rank.

In \cite{gmt}, a characterization of maximal trees was reported as follows: every maximal tree $T$ of rank $r\geqslant4$ is obtained from a maximal tree $T'$ of rank $r-2$ in one of the following two ways:
\begin{itemize}
\item[(i)] attaching a vertex of a $P_2$ to a vertex of $T'$ of rank $r-2$ which is neither pendant nor pre-pendant;
\item[(ii)] attaching a pendant vertex of a $P_3$ to a pre-pendant vertex of $T'$ with rank $r-2$;
\end{itemize}
where $P_n$  denotes the path graph of order $n$.
We claim that the above characterization is not exhaustive. To see this, consider the tree $T$ of Figure~\ref{example}.
\begin{figure}
\centering\begin{tikzpicture}
\draw [line width=.5pt] (-3,0)-- (4,0);
\draw [line width=.5pt] (1,0)-- (1,-1);
\draw [line width=.5pt] (0,0)-- (0,-1);
\draw [fill=black] (-3,0) circle (3pt);
\draw (-3,0.3) node {$\alpha$};
\draw [fill=black] (-2,0) circle (3pt);
\draw (-2,0.3) node {$0$};
\draw [fill=black] (-1,0) circle (3pt);
\draw (-1,0.3) node {$-\alpha$};
\draw [fill=black] (0,0) circle (3pt);
\draw (0,0.3) node {$0$};
\draw [fill=black] (0,-1) circle (3pt);
\draw(0,-1.3) node {$\alpha$};
\draw [fill=black] (1,0) circle (3pt);
\draw (1,0.3) node {$0$};
\draw [fill=black] (1,-1) circle (3pt);
\draw (1,-1.35) node {$\beta$};
\draw [fill=black] (2,0) circle (3pt);
\draw (2,0.3) node {$-\beta$};
\draw [fill=black] (3,0) circle (3pt);
\draw (3,0.3) node {$0$};
\draw [fill=black] (4,0) circle (3pt);
\draw (4,.3) node {$\beta$};
\end{tikzpicture}
\caption{A maximal tree which is not obtained by (i) or (ii).}\label{example}
\end{figure}
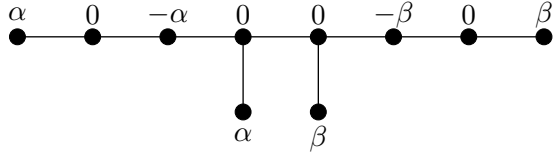
For any given real numbers $\alpha,\beta$, the vector shown on the vertices of $T$ forms a null vector of $A(T)$. (Observe that the components of the given vector on the neighbors of every vertex sum up to $0$.) In fact any null vector of $A(T)$ has this form and thus by Lemma~\ref{nullprependant} (below), $T$ is a maximal tree.
Note that $T$ cannot be obtained by (i). However, it can be obtained by attaching a pendant vertex of a $P_3$ to a pre-pendant vertex of some tree $T'$ which is not maximal. This means that $T$ cannot be constructed by (i) nor by (ii).

In this section, we show that there is one more construction which completes the characterization of maximal trees given in \cite{gmt}.

We denote the column space and the null space of a matrix $M$
  by $\col(M)$ and $\nul(M)$, respectively.
A vertex $v$ of a graph $G$ is called a {\em null vertex} if for every $\x\in\nul(A(G))$, the corresponding component to $v$ is zero. Note that a pre-pendant vertex is always a null vertex. If $S$ is a subset of vertices of $G$, we denote the graph obtained by removing the vertices of $S$ from $G$ by $G-S$. For simplicity, we use $G-v$ for $G-\{v\}$. We denote the degree of a vertex $v$ in a graph $G$ by $d_G(v)$, or by $d(v)$.

The following lemma is well-known and  easy to verify.
\begin{lem}\label{G-{u,v}}
Let $G$ be a graph and $u$ be a pendant vertex of $G$ with the neighbor $v$. Then $\rk(G) = \rk(G-\{u,v\}) + 2$.
\end{lem}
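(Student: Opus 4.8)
The plan is to compute the rank directly from the adjacency matrix, exploiting the very special shape that a pendant vertex forces. I would order the vertices of $G$ so that $u$ comes first, its neighbor $v$ second, and the remaining $n-2$ vertices of $G-\{u,v\}$ last. Because $u$ is adjacent only to $v$, both the row and the column indexed by $u$ have a single nonzero entry, namely the $1$ shared with $v$. Writing $\mathbf{b}$ for the $0/1$ column vector recording the adjacencies of $v$ to the vertices of $G-\{u,v\}$ and $B=A(G-\{u,v\})$, the adjacency matrix takes the block form
\[
A(G)=\begin{pmatrix} 0 & 1 & \0^{\top}\\ 1 & 0 & \mathbf{b}^{\top}\\ \0 & \mathbf{b} & B\end{pmatrix}.
\]

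The key step is to use the two $1$'s contributed by $u$ as pivots. Since the $u$-column is supported only in the $v$-row, subtracting appropriate multiples of it from the remaining columns clears the entries of $\mathbf{b}$ sitting in the $v$-row, without disturbing $B$; symmetrically, the $u$-row, supported only in the $v$-column, can be used to clear the copy of $\mathbf{b}$ appearing in the $v$-column. After these elementary row and column operations the matrix becomes block diagonal, with $\left(\begin{smallmatrix}0&1\\1&0\end{smallmatrix}\right)$ in the $\{u,v\}$ block, $B$ in the remaining block, and all off-diagonal blocks zero. As elementary operations preserve rank and the $2\times 2$ block has rank $2$, I conclude $\rk(G)=2+\rk(B)=\rk(G-\{u,v\})+2$.

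There is essentially no serious obstacle here, which is why the statement is flagged as well known; the only point requiring care is verifying that the column (resp. row) operations used to clear the $v$-row (resp. $v$-column) leave $B$ untouched, which holds precisely because the $u$-row and $u$-column each have support of size one. If one prefers to avoid matrix manipulations, the same conclusion follows by comparing nullities: solving $A(G)\x=\0$ forces the $v$-component of $\x$ to vanish (from the $u$-equation), and the remaining equations then reduce to $B\y=\0$, where $\y$ is the restriction of $\x$ to $G-\{u,v\}$. Hence $A(G)$ and $B$ have equal nullity, and the rank--nullity theorem applied to the two matrices, whose orders differ by $2$, yields the claim.
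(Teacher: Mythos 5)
Your proof is correct. There is nothing in the paper to compare it against: the authors state this lemma with the remark that it ``is well-known and easy to verify'' and give no proof at all, so your write-up supplies exactly the verification that was omitted. Both of your arguments are sound. The pivoting argument works precisely for the reason you flag: since the $u$-row and $u$-column each have a single nonzero entry, subtracting multiples of the $u$-column from the later columns kills the copy of $\mathbf{b}$ in the $v$-row without touching $B$, and symmetrically for the rows, leaving a block-diagonal matrix whose rank is visibly $2+\rk(B)$. The nullity argument is equally valid: the $u$-equation forces the $v$-component of any $\x\in\nul(A(G))$ to vanish, the remaining equations reduce to $B\y=\0$ for the restriction $\y$, and conversely each $\y\in\nul(B)$ extends uniquely to a null vector of $A(G)$ by setting the $v$-component to $0$ and the $u$-component to $-\mathbf{b}^\top\y$; hence the nullities agree and rank--nullity gives the claim. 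It is worth noting that this second argument is the one most consonant with how the lemma is actually deployed in the paper: throughout the proof of Theorem~\ref{thm:trees} the authors pass between $\nul(A(T))$ and null spaces of subtrees by restricting and extending null vectors, which is exactly the bookkeeping your nullity argument makes explicit.
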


From Lemma~\ref{G-{u,v}} and induction, the following fact can be deduced.
\begin{lem}\label{matching}
The rank of any tree is twice its matching number.
\end{lem}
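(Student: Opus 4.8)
The plan is to prove that $\rk(T) = 2\nu(T)$ for any tree $T$, where $\nu(T)$ denotes the matching number, by induction on the order of $T$. As the paper suggests, this will be a direct consequence of Lemma~\ref{G-{u,v}}, so the main task is to set up the induction correctly and handle the base and degenerate cases.

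First I would dispose of the trivial cases: if $T$ has no edges (a single vertex, or more generally an edgeless graph, but for a tree this means $|V(T)|=1$), then $A(T)=0$, so $\rk(T)=0=2\cdot 0=2\nu(T)$, since the empty matching is the only matching. This anchors the induction.

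For the inductive step, I would take a tree $T$ with at least one edge. Every finite tree with an edge has a pendant (leaf) vertex $u$; let $v$ be its unique neighbor. By Lemma~\ref{G-{u,v}} we have $\rk(T) = \rk(T-\{u,v\}) + 2$. The graph $T' := T-\{u,v\}$ is a forest (deleting vertices from a tree leaves a forest), each of whose components is a tree of strictly smaller order, so the inductive hypothesis applies componentwise; since rank and matching number are both additive over connected components, we get $\rk(T') = 2\nu(T')$. It then remains to relate the matching numbers: I would argue that $\nu(T) = \nu(T') + 1$. The inequality $\nu(T)\geqslant\nu(T')+1$ is clear because any maximum matching of $T'$ can be extended by the edge $uv$. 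For the reverse inequality, $\nu(T)\leqslant\nu(T')+1$, I would invoke the standard fact that a leaf edge is always contained in some maximum matching of $T$: given a maximum matching $M$ of $T$, if $M$ misses $u$ then $v$ must be matched (else $M\cup\{uv\}$ beats $M$), and swapping $v$'s matching edge for $uv$ yields a maximum matching containing $uv$; deleting $uv$ from it gives a matching of $T'$ of size $\nu(T)-1$. Combining, $\rk(T) = \rk(T')+2 = 2\nu(T')+2 = 2\nu(T)$, completing the induction.

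The one point requiring genuine care—rather than the routine algebra—is the componentwise reduction: after deleting $u$ and $v$, the object $T'$ is no longer a tree but a forest, so strictly speaking the inductive hypothesis must be stated for forests, or else one verifies (as above) that both $\rk$ and $\nu$ decompose as sums over the connected components and applies the hypothesis to each tree component separately. I expect this bookkeeping to be the only subtle step; everything else follows mechanically from Lemma~\ref{G-{u,v}} and the elementary matching argument.
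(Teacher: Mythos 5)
Your proof is correct and follows exactly the route the paper intends: the paper merely states that the lemma ``can be deduced from Lemma~\ref{G-{u,v}} and induction,'' and your argument is precisely that induction, with the pendant-edge deletion, the identity $\nu(T)=\nu(T-\{u,v\})+1$, and the forest/componentwise bookkeeping filled in correctly.
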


The following lemma gives a characterization of maximal trees in terms of null vertices.
\begin{lem} [\cite{gmt}]\label{nullprependant}
A reduced tree $T$ is maximal if and only if for every vertex $v$ which is not pre-pendant, $\rk(T) = \rk(T-v)$; or equivalently, $v$ is a null vertex if and only if it is  pre-pendant.
\end{lem}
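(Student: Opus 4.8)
The plan is to work with the one-vertex version of maximality recalled in the introduction: the reduced tree $T$ is maximal within the family of trees if and only if every reduced tree $H$ with $T = H-w$ for some vertex $w$ satisfies $\rk(H) > \rk(T)$. The first step is to describe these one-vertex extensions concretely. If $H$ is a tree and $T = H-w$ is again a tree (in particular connected), then $w$ must be a leaf of $H$, so $H$ arises from $T$ by attaching a new pendant vertex $w$ to some vertex $v$ of $T$. Since $T$ is reduced, the only way $H$ can fail to be reduced is that $v$ already has a pendant neighbour, i.e.\ that $v$ is pre-pendant; hence the admissible extensions are exactly those obtained by attaching a pendant to a vertex $v$ that is \emph{not} pre-pendant. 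For such an $H$, the new vertex $w$ is pendant with neighbour $v$, so Lemma~\ref{G-{u,v}} gives $\rk(H) = \rk(H-\{w,v\})+2 = \rk(T-v)+2$. Therefore $T$ is maximal if and only if $\rk(T-v)+2 > \rk(T)$ for every non-pre-pendant vertex $v$.

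Next I would eliminate the inequality using parity. Deleting a single vertex removes one row and one column of the (symmetric) adjacency matrix, so $\rk(T)-2 \le \rk(T-v) \le \rk(T)$ for every $v$. By Lemma~\ref{matching} the rank of a tree---and, summing over components, of any forest---is even; since $T-v$ is a forest, both $\rk(T)$ and $\rk(T-v)$ are even, so their difference lies in $\{0,2\}$. Consequently $\rk(T-v)+2 > \rk(T)$ is equivalent to $\rk(T-v) = \rk(T)$, and the previous step yields the first formulation: $T$ is maximal if and only if $\rk(T-v) = \rk(T)$ for every vertex $v$ that is not pre-pendant.

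It remains to pass to the null-vertex formulation, for which I would prove the key identity that, for every vertex $v$ of $T$,
\[
v \text{ is a null vertex} \iff \rk(T-v) = \rk(T)-2 .
\]
Order the vertices so that $v$ is last and write $A(T)=\left(\begin{smallmatrix} A' & b \\ b^{\top} & 0 \end{smallmatrix}\right)$, where $A'=A(T-v)$ and $b$ is the adjacency vector of $v$ restricted to $T-v$. A vector $\x=(\x',t)$ lies in $\nul(A(T))$ exactly when $A'\x' = -t\,b$ and $b^{\top}\x' = 0$. If some null vector has $t\neq0$, then $b = -t^{-1}A'\x' \in \col(A')$, so bordering $A'$ by $b$ increases the rank by at most $1$, hence by $0$ thanks to parity; thus $\rk(T-v)=\rk(T)$. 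Conversely, when $\rk(T-v)=\rk(T)$ the bordering does not raise the rank, so $b\in\col(A')$, say $A'z=b$, and since the increase is $0$ rather than $1$ we have $b^{\top}z=0$; then $\x'=-z$ solves $A'\x' = -b$ with $b^{\top}\x'=0$, producing a null vector with $t=1\neq0$. Hence $v$ is non-null precisely when $\rk(T-v)=\rk(T)$. Since a pre-pendant vertex is always a null vertex, the condition ``$\rk(T-v)=\rk(T)$ for every non-pre-pendant $v$'' is the same as ``no non-pre-pendant vertex is null,'' i.e.\ the null vertices are exactly the pre-pendant ones; this establishes the equivalence of the two stated formulations.

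The routine parts are the description of the extensions and the matrix bordering. The step I expect to require the most care is the key identity above: the clean equivalence between being a null vertex and a rank drop of exactly $2$ hinges on ruling out a rank drop of $1$, and this is precisely where the evenness of forest ranks (Lemma~\ref{matching}) is indispensable---without it the bordering analysis would leave an ambiguous intermediate case.
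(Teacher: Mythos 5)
You should know at the outset that the paper contains no proof of this lemma: it is quoted from \cite{gmt} with only a citation, so there is no in-paper argument to compare against, and your proposal has to be judged on its own merits. Judged so, it is correct and complete. Your route --- (1) classify the one-vertex reduced tree extensions as exactly the attachments of a new leaf to a non-pre-pendant vertex, (2) convert maximality into ``$\rk(T-v)=\rk(T)$ for every non-pre-pendant $v$'' using Lemma~\ref{G-{u,v}} together with the parity of forest ranks coming from Lemma~\ref{matching} (summed over components), and (3) identify the non-null vertices with the vertices whose deletion preserves the rank via the bordered-matrix analysis --- is built entirely from tools the paper already has on hand: step (3) is in effect Lemma~\ref{adding vertex} specialized to a zero corner entry, combined once more with parity to exclude the rank-increase-by-one case, and you apply it correctly in both directions (in particular, the null vector $(-z,1)$ constructed from $A'z=b$ and $b^\top z=0$ genuinely witnesses that $v$ is not null). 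The one subtlety worth flagging --- inherited from the paper rather than created by you --- is the definition of maximality: you use the one-vertex-removal definition from the introduction, while Section~\ref{sec:tree} also glosses a maximal tree as ``not a proper subgraph of a reduced tree with the same rank.'' The two notions are in fact equivalent for reduced trees, and the equivalence can be deduced from the very characterization you prove (in a rank-preserving reduced extension the added vertices must form an independent set of leaves attached to distinct null, hence pre-pendant, vertices of $T$, and reducedness of the extension then forces each corresponding pendant partner to be both pendant and pre-pendant, which is only possible for $T=P_2$, a contradiction); but neither the paper nor your write-up addresses this, so strictly speaking your argument establishes the lemma for the introduction's definition of maximality only.
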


Now, we  present the main result of this section on the characterization of maximal trees.
\begin{thm}\label{thm:trees}
Every maximal tree $T$ of rank $r\geqslant4$ is obtained from a maximal tree $T'$ of a smaller rank in one of the following three ways:
\begin{itemize}
\item[{\rm (i)}] attaching a vertex of a $P_2$ to a vertex of $T'$ with rank $r-2$ which is neither pendant nor pre-pendant;
\item[{\rm (ii)}] attaching a pendant vertex of a $P_3$ to a pre-pendant vertex of $T'$ with rank $r-2$;
\item[{\rm (iii)}] attaching a pre-pendant vertex of a $P_5$ to a pre-pendant vertex of $T'$ with rank $r-4$ for $r\geqslant8$.
\end{itemize}
\end{thm}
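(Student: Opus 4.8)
The plan is to peel a maximal tree $T$ of rank $r\geqslant4$ down to a smaller maximal tree by deleting a pendant $P_2$, a pendant $P_3$, or a $P_5$-gadget, using Lemma~\ref{nullprependant} as the principal tool: a reduced tree is maximal exactly when its null vertices coincide with its pre-pendant vertices. Accordingly I keep track, for a null vector $x$ of $A(T)$, of which coordinates are forced to vanish. Fix a longest path $v_0v_1\cdots v_k$. Since the path is longest and $T$ is reduced, the neighbour $v_1$ of the leaf $v_0$ has degree $2$; let $v_2$ be its other neighbour. The whole argument is a case analysis on the immediate neighbourhood of $v_2$, read off from the null-constraints at $v_0$ (namely $x_{v_1}=0$) and at $v_1$.

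First I exclude that $v_2$ is pre-pendant: if $v_2$ had a leaf neighbour $\ell$, the constraint at $\ell$ forces $x_{v_2}=0$, and then the constraint at $v_1$ forces $x_{v_0}=0$, so the leaf $v_0$ would be a null vertex, contradicting maximality by Lemma~\ref{nullprependant}. Hence every neighbour of $v_2$ other than $v_3$ is the root of a pendant $P_2$ (a degree-$2$ vertex carrying one leaf), the possibilities being pinned down by the longest-path condition, reducedness, and the fact that $v_2$ is not pre-pendant. If $v_2$ carries at least two such pendant $P_2$'s, equivalently $\deg v_2\geqslant3$, I delete one of them, say $\{v_0,v_1\}$, to form $T'$. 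Extending each null vector of $T'$ by $x_{v_1}=0$ and $x_{v_0}=-x_{v_2}$, and restricting back, gives a linear isomorphism $\nul(A(T'))\cong\nul(A(T))$, so the null vertices of $T'$ are exactly those of $T$ lying in $T'$; as the pre-pendant vertices transform the same way and $v_2$ stays internal and non-pre-pendant, one checks that $T'$ is reduced and maximal of rank $r-2$, and $T$ arises by (i).

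The remaining case is $\deg v_2=2$, so $v_0v_1v_2v_3$ is an initial segment of the path with $v_1,v_2$ of degree $2$. The constraint at $v_2$ now reads $x_{v_1}+x_{v_3}=0$, and since $x_{v_1}=0$ this forces $x_{v_3}=0$ for \emph{every} null vector; hence $v_3$ is a null vertex, so by maximality $v_3$ is pre-pendant and carries a leaf $\ell_3$. The natural candidate is construction (ii): set $T'=T-\{v_0,v_1,v_2\}$. Since $v_2$ and $\ell_3$ are twins in $T-\{v_0,v_1\}$, deleting one does not change the rank, so $\rk(T')=r-2$, and $v_3$ remains pre-pendant in $T'$. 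If $T'$ is maximal, then $T$ is obtained by attaching a pendant vertex of the $P_3$ on $\{v_0,v_1,v_2\}$ to the pre-pendant vertex $v_3$ of $T'$, which is exactly (ii).

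It remains to treat the case where this $T'$ \emph{fails} to be maximal, and this is the main obstacle. By Lemma~\ref{nullprependant} the failure produces an extra null vertex $v$ of $T'$, one null in $T'$ but not in $T$. The key observation is that the tail parameter $s=x_{v_2}$ enters the null system of $T'$ only through the equation at $v_3$ (present for $T$ but replaced for $T'$ by the forced relation $x_{v_3}=0$); tracing this dependence, I expect to show that an extra null vertex can appear only when $v_3$ has degree exactly $3$ with neighbours $v_2,\ell_3,v_4$, when $v_4$ is pre-pendant, and that the extra null vertex is then $\ell_3$ with $x_{\ell_3}=-x_{v_2}$. In that configuration $\{v_0,v_1,v_2,v_3,\ell_3\}$ induces a $P_5$ whose pre-pendant vertex $v_3$ is attached to the pre-pendant vertex $v_4$ of $T''=T-\{v_0,v_1,v_2,v_3,\ell_3\}$; repeated use of Lemma~\ref{G-{u,v}} together with the twin observation gives $\rk(T'')=r-4$, and a final null-vertex count shows $T''$ is maximal, yielding (iii). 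The constraint $r\geqslant8$ enters precisely because $T''$ must itself have rank $r-4\geqslant4$. The delicate step, and the one I expect to require the most care, is proving that a non-maximal $T'$ forces exactly this $P_5$-gadget — ruling out extra null vertices elsewhere in the tree and cancellations among several non-null neighbours of $v_3$ — and that the resulting $T''$ is genuinely maximal.
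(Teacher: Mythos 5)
Your plan coincides with the paper's proof in structure: fix a longest path, use Lemma~\ref{nullprependant} to identify maximality with ``null vertex $=$ pre-pendant vertex,'' split on the degree of the third vertex $v_2$ to obtain cases (i) and (ii), and blame the failure of $T'=T-\{v_0,v_1,v_2\}$ to be maximal for the appearance of construction (iii). The parts you actually carry out are correct, and two of your steps are even slicker than the paper's: you obtain $\rk(T')=r-2$ from the observation that $v_2$ and $\ell_3$ are twin pendant vertices in $T-\{v_0,v_1\}$, and you deduce that $v_3$ is pre-pendant directly from the null-space equations at $v_0$ and $v_2$, where the paper invokes a maximum-matching argument via Lemma~\ref{matching}.

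However, there is a genuine gap, and it sits exactly on the theorem's main content: the case where $T'$ is \emph{not} maximal. Here you only write that you ``expect to show'' that the extra null vertex of $T'$ must be $\ell_3$, that $v_3$ must have degree $3$ in $T$, and that $v_4$ must be pre-pendant; you yourself flag this as the delicate step, and no argument is supplied. This step is precisely what distinguishes the theorem from the (incomplete) characterization of \cite{gmt}, and it decomposes into four claims, none of which is routine: (a) $\ell_3$ is the \emph{unique} non-pre-pendant null vertex of $T'$; (b) every neighbour of $v_3$ other than $\ell_3$ is pre-pendant in $T'$; (c) $d_{T'}(v_3)=2$; (d) $T''=T'-\{v_3,\ell_3\}$ is maximal of rank $r-4$. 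The paper proves (a) by building an explicit basis of $\nul(A(T))$ from a basis of $\nul(A(T'))$ extended by zero, together with one extra vector supported on $\{v_0,v_2,\ell_3\}$ with values $1,-1,1$; any second non-pre-pendant null vertex of $T'$ would then persist as a non-pre-pendant null vertex of $T$, contradicting maximality of $T$. It proves (b) by a scaling trick: if $h$ were a non-pre-pendant neighbour of $v_3$, pick $\x\in\nul(A(T'))$ with $\x(h)\ne0$, double $\x$ on the component of $T'-v_3$ containing $h$, and put the value $-\x(h)$ at $\ell_3$; this is a null vector of $T'$ nonzero at $\ell_3$, contradicting the nullity of $\ell_3$. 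Claim (c) again needs the longest-path hypothesis to bound the branches at $v_3$, and (d) follows by restricting null vectors. Without proofs of (a)--(d) --- in particular without ruling out extra null vertices elsewhere in $T'$ and without excluding $d_{T'}(v_3)\geqslant3$ --- the proposal does not establish the theorem. Finally, your one-sentence account of the hypothesis $r\geqslant8$ is also not a proof: one must check that if $T''$ had rank $2$, i.e.\ $T''=P_2$, then the tree assembled as in (iii) would contain a non-pre-pendant null vertex (the far leaf of $T''$) and hence could not be the maximal tree $T$ one started from.
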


\begin{proof}{
 We first show that any tree resulting from (i)--(iii) is maximal.
 Let $T'$ be a maximal tree and $T$ is obtained by attaching a vertex $v_1$ of a $P_2$ to a vertex $u$ of $T'$. Let $v_2$ be the other vertex of $P_2$.
 In view of Lemma~\ref{G-{u,v}}, $\dim\nul(A(T))=\dim\nul(A(T'))$. We see that any $\x'\in\nul(A(T'))$ can be extended to a $\x\in\nul(A(T))$ by defining $\x(v_1)=0$ and $\x(v_2)=-\x'(u)$. It follows that, besides $v_1$, all other null vertices and also pre-pendant vertices of $T$ and of $T'$ coincide. So by Lemma~\ref{nullprependant}, $T$ is maximal.

 Next, let $T$ be obtained by (ii) from $T'$. Suppose that $v_1,v_2,v_3$ are the vertices of a $P_3$, where $v_1$ is attached to a pre-pendant vertex $u$ of $T'$ and $u'$ is the pendant neighbor of $u$.
From Lemma~\ref{G-{u,v}} it follows that $\rk(T)=\rk(T')+2$ which means $\dim\nul(A(T))=\dim\nul(A(T'))+1$. Let $\{\x'_1, \ldots, \x'_{s-1}\}$ be a basis for  $\nul(A(T'))$. We introduce a basis $\{\x_1, \ldots, \x_s\}$ for  $\nul(A(T))$ as follows. For $1\leqslant i\leqslant s-1$, we extend $\x'_i$  to  $\x_i\in\nul(A(T))$ by defining $\x_i(v_1)=\x_i(v_2)=\x_i(v_3)=0$. Further, let $\x_s$ to be zero on $V(T'-u')$, $\x_s(u') = -\x_s(v_1) = \x_s(v_3) = 1$ and $\x_s(v_2) = 0$. In view of Lemma~\ref{nullprependant}, it turns out that $T$ is a maximal tree. The argument for (iii) is similar to (ii).

 Now, let $T$ be a maximal tree of rank $r\geqslant4$ which is not obtained by (i). We prove that $T$ is obtained by (ii) or (iii).
Note that the only reduced tree of rank $\geqslant4$ and diameter $\leqslant3$ is $P_4$ which is not maximal. So the diameter of $T$ is at least $4$.
Consider a longest path $P$ in $T$ and call its first five vertices from one end $u, v, w, y, z$, respectively. So $u$ is a pendant vertex and $d(v) = 2$. We claim that $w$ is not a pre-pendant vertex. Otherwise, for any vector $\x\in\nul(A(T))$, we have $\x(w) = 0$. Also, since the sum of the components of $\x$ corresponding to the neighbors of $v$ is zero, we have $\x(u) = 0$ which is impossible by Lemma~\ref{nullprependant}. This proves the claim. Furthermore, if $d(w)\geqslant3$, then by Lemmas~\ref{G-{u,v}} and \ref{nullprependant}, $T-\{u,v\}$ would be a maximal tree of rank $r-2$ (since $\nul(A(T-\{u,v\}))$ can be obtained by the restriction of the vectors of $\nul(A(T))$ to $T-\{u,v\}$) which contradicts the assumption on $T$. Thus $d(w) = 2$. We show that $T' = T - \{u, v, w\}$ is a reduced tree of rank $r-2$. Applying Lemmas \ref{G-{u,v}} and \ref{nullprependant}, we find that $\rk(T') = \rk(T-u) - 2 = r - 2$.
To prove that $T'$ is reduced, it suffices to show that $y$ is a pre-pendant vertex in $T$.
Let $M$ be a maximum matching of $T$. If $y$ is not covered by $M$, then $wy\not\in M$. It turns out that $\left(M\setminus\{vw\}\right)\cup\{uv,wy\}$ is a matching of $T$ with  larger size than $M$ which in turn implies that $y$ is covered by every maximum matching of $T$, and so by Lemma~\ref{matching}, $\rk(T-y) = r-2$. From Lemma~\ref{nullprependant}, it follows that $y$ is a pre-pendant vertex of $T$, as desired.
 Hence $T'$ is reduced.  If $T'$ is a maximal tree, then $T$ is obtained by (ii).
 Now, suppose that  $T'$ is not a maximal tree.  Let $p$ be the pendant neighbor of $y$. Recall that $z$ is also a neighbor of $y$.   We show that
\begin{itemize}
\item[{\rm (a)}] $p$ is the only null vertex of $T'$ which is not pre-pendant;
\item[\rm (b)] $z$ is a pre-pendant vertex of $T'$;
\item[\rm (c)] $d_{T'}(y) = 2$;
\item[\rm (d)] $T'' = T' - \{y, p\}$ is a maximal tree of rank $r-4$.
\end{itemize}
The claimed conditions is demonstrated in Figure~\ref{figureTree}.
From (a)--(d) it follows that $T$ is obtained by (iii). So the proof will be completed by verifying (a)--(d) as follows.
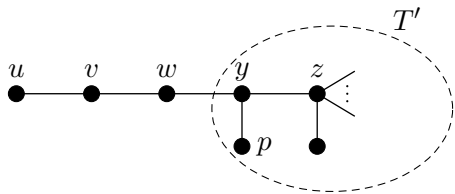
\begin{figure}\centering
\begin{tikzpicture}
\draw (3.2,1) node {$T'$};
\draw [line width=.5pt] (-2,0)-- (2,0);
\draw [line width=.5pt] (2,0)-- (2.5,.3);
\draw [line width=.5pt] (2,0)-- (2.5,-.3);
\draw [line width=.5pt] (1,0)-- (1,-.7);
\draw [line width=.5pt] (2,0)-- (2,-.7);
\draw [fill=black] (0,0) circle (3pt);
\draw (0,0.3) node {$w$};
\draw [fill=black] (-1,0) circle (3pt);
\draw (-1,0.3) node {$v$};
\draw [fill=black] (-2,0) circle (3pt);
\draw(-2,0.3) node {$u$};
\draw [fill=black] (1,0) circle (3pt);
\draw (1,0.3) node {$y$};
\draw [fill=black] (2,0) circle (3pt);
\draw (2,0.3) node {$z$};
\draw [fill=black] (2,-.7) circle (3pt);
\draw [fill=black] (1,-.7) circle (3pt);
\draw (1.3,-.7) node {$p$};
\draw [fill=black] (2.4,0.1) circle (.2pt);
\draw [fill=black] (2.4,0) circle (.2pt);
\draw [fill=black] (2.4,-0.1) circle (0.2pt);
\draw[densely dashed] (2.2,-.2) ellipse (1.6cm and 1.1cm);
\end{tikzpicture}
\caption{The situation of $T$ in Case~(iii).}
\label{figureTree}
\end{figure}
\begin{itemize}
\item[(a)] As $T'$ is not maximal, in view of Lemma~\ref{nullprependant},  $T'$ has at least one non-pre-pendant null vertex. Suppose that $q\neq p$ is a null vertex of $T'$ which is not pre-pendant. Let $\{\x'_1, \ldots, \x'_{s-1}\}$ be a basis for the null space of $A(T')$. We introduce a basis $\{\x_1, \ldots, \x_s\}$ for the null space of $A(T)$ as follows. For $1\leqslant i\leqslant s-1$, we let $\x_i(a) = \x'_i(a)$ for every $a \in V(T')$ and we set $\x_i(u) = \x_i(v) = \x_i(w) = 0$. Moreover, let $\x_s$  be zero on $V(T'-p)$, $\x_s(u) = -\x_s(w) = \x_s(p) = 1$, and $\x_s(v) = 0$. All $\x_1, \ldots, \x_s$ are zero on $q$ which means that $q$ is a non-pre-pendant null vertex for $T$ which is a contradiction by Lemma~\ref{nullprependant}. Therefore, $p$ is a unique non-pre-pendant null vertex of $T'$.
\item[(b)]  We claim that all the neighbors of $y$, excluding $p$, are pre-pendant. To obtain a contradiction, let  $h$ be a  non-pre-pendant neighbor of $y$. Since $p$ is the only non-pre-pendant null vertex of $T'$, $h$ is not a null vertex and thus there is a vector $\x\in\nul(A(T'))$ such that $\x(h)\ne0$. Let $T''$ be the connected component of $T'-y$ containing $h$. We define the vector $\y$ on $V(T)$ such that $\y(a) = 2\x(a)$ for $a \in V(T'')$, $\y(p) = -\x(h)$, and $\y(b) = \x(b)$ for the remaining vertices $b$ of $T'$.
      Clearly, $\y$ belongs to  $\nul(A(T'))$ with $\y(p) \neq 0$. So $p$ is not a null vertex which is a contradiction. Therefore, excluding $p$ all the neighbors of $y$ (including $z$) are pre-pendant.
 \item[(c)]  We establish this claim by a contradiction. Assume that $d_{T'}(y)=k\geqslant3$, and $T'_1,\ldots,T'_k$ are the components of $T'-y$.
 If for at least two $j$'s, $T'_j$ contains a vertex in distance $\geqslant4$ from $y$, then we have a path longer than $P$ in $T$ which is a contradiction.
 So, for some $j$, any pendant vertex $q$ of $T'_j$ have distance $\ell\leqslant3$ from $y$. If $\ell=3$, let $Q=qq_1q_2y$ be the path between $q$ and $y$.
 The vertex $q_1$ is pre-pendant and thus a null vertex. The vertex $q_2$ is a neighbor of $y$ and by (b), it is pre-pendant and hence a null vertex.  Now, since $Q$ is a longest path between a vertex of $T'_j$ and $y$, we have $d_T(q_1)=2$. As the two neighbors of $q$ are null, it follows that $q$ is also null which is a contradiction. If $\ell=2$, then we consider  $Q=qq_1y$. Since $y$ is a pre-pendant vertex,  $y$ is a null vertex. Similarly, we have $d_T(q_1)=2$. Thus $q$ is a null vertex which is a contradiction.
It turns out that $k=2$. 
\item[(d)] Lemma \ref{G-{u,v}} implies that $\rk(T'')=r-4$. As $y$ and $p$ are null vertices of $T'$, $\nul(A(T''))$ can be obtained by the restriction of any vector of $\nul(A(T'))$ to $T''$. From (a), it follows that every non-pre-pendant vertex of $T''$ is not a null vertex and so by Lemma~\ref{nullprependant},  $T''$ is a maximal tree.
\end{itemize}
The proof is now complete.}\end{proof}

For an illustration of how maximal trees with rank up to 8 can be constructed by Theorem~\ref{thm:trees}, see Table~\ref{tab:maxtree}.

\begin{table}[h!]\label{tab:maxtree}
\centering
\begin{tabular}{cccc}
\hline
Rank & Maximal trees \\ \hline \vspace{-.35cm} \\
2&
	\begin{tikzpicture}[scale=.8]
	\draw [line width=.5pt] (-3,0)-- (-2,0);	
	\draw [fill=black] (-3,0) circle (2.5pt);
	\draw [fill=black] (-2,0) circle (2.5pt);	
	\end{tikzpicture}
\\ \hline   \vspace{-.35cm} \\
4&
\begin{tikzpicture}[scale=.8]
\draw [line width=.5pt] (0,0)-- (1,0);
\draw [line width=.7pt,densely dashed] (1,0)-- (2,0);
\draw [line width=.5pt] (2,0)-- (3,0);
\draw [line width=.5pt] (3,0)-- (4,0);
\draw [fill=black] (0,0) circle (2.5pt);
\draw [fill=black] (1,0) circle (2.5pt);
\draw (2,0) circle (3pt);
\draw (3,0) circle (3pt);
\draw (4,0) circle (3pt);
\end{tikzpicture}
\\ \hline  \vspace{-.2cm} \\
6&
$\begin{array}{ccc}
		\begin{tikzpicture}[scale=.75]
		\draw [line width=.5pt] (0,0)-- (1,0);
		\draw [line width=.5pt] (1,0)-- (2,0);
		\draw [line width=.5pt] (2,0)-- (3,0);
		\draw [line width=.5pt] (3,0)-- (4,0);
		\draw [line width=.7pt,densely dashed] (2,0)-- (2,1);
		\draw [line width=.5pt] (2,1)-- (3,1);
		\draw [fill=black] (0,0) circle (2.5pt);
		\draw [fill=black] (1,0) circle (2.5pt);
		\draw [fill=black] (2,0) circle (2.5pt);
		\draw [fill=black] (3,0) circle (2.5pt);
		\draw [fill=black] (4,0) circle (2.5pt);
		\draw (2,1) circle (3pt);
		\draw (3,1) circle (3pt);
		\end{tikzpicture}&&
		\begin{tikzpicture}[scale=.75]
		\draw [line width=.5pt] (0,0)-- (1,0);
		\draw [line width=.5pt] (1,0)-- (2,0);
		\draw [line width=.5pt] (2,0)-- (3,0);
		\draw [line width=.5pt] (3,0)-- (4,0);
		\draw [line width=.5pt] (1,1)-- (2,1);
		\draw [line width=.5pt] (2,1)-- (3,1);
		\draw [line width=.5pt] (3,1)-- (1,1);
		\draw [line width=.7pt,densely dashed] (1,1)-- (1,0);
		\draw [fill=black] (0,0) circle (2.5pt);
		\draw [fill=black] (1,0) circle (2.5pt);
		\draw [fill=black] (2,0) circle (2.5pt);
		\draw [fill=black] (3,0) circle (2.5pt);
		\draw [fill=black] (4,0) circle (2.5pt);
		\draw  (1,1) circle (3pt);
		\draw  (2,1) circle (3pt);
		\draw  (3,1) circle (3pt);
		\end{tikzpicture}
	\end{array}$
\\ \hline  \vspace{-.2cm} \\
8&
$\begin{array}{c}
\begin{array}{ccc}	
		\begin{tikzpicture}[scale=.8]
		\draw [line width=.5pt] (0,0)-- (1,0);
		\draw [line width=.5pt] (1,0)-- (2,0);
		\draw [line width=.5pt] (2,0)-- (3,0);
		\draw [line width=.5pt] (3,0)-- (4,0);
		\draw [line width=.5pt] (2,0)-- (2,1);
		\draw [line width=.5pt] (2,1)-- (3,1);
		\draw [line width=.5pt] (2,-1)-- (3,-1);
		\draw [line width=.7pt,densely dashed] (2,0)-- (2,-1);
		\draw [fill=black] (0,0) circle (2.5pt);
		\draw [fill=black] (1,0) circle (2.5pt);
		\draw [fill=black] (2,0) circle (2.5pt);
		\draw [fill=black] (3,0) circle (2.5pt);
		\draw [fill=black] (4,0) circle (2.5pt);
		\draw [fill=black] (2,1) circle (2.5pt);
		\draw [fill=black] (3,1) circle (2.5pt);
		\draw  (2,-1) circle (3pt);
		\draw  (3,-1) circle (3pt);
		\end{tikzpicture}&&
	\begin{tikzpicture}[scale=.8]
	\draw [line width=.5pt] (0,0)-- (1,0);
	\draw [line width=.5pt] (1,0)-- (2,0);
	\draw [line width=.5pt] (2,0)-- (3,0);
	\draw [line width=.5pt] (3,0)-- (4,0);
	\draw [line width=.5pt] (0,-1)-- (1,-1);
	\draw [line width=.5pt] (1,-1)-- (2,-1);
	\draw [line width=.5pt] (2,-1)-- (3,-1);
	\draw [line width=.5pt] (3,-1)-- (4,-1);
	\draw [line width=.7pt,densely dashed] (1,0)-- (1,-1);
	\draw [fill=black] (0,0) circle (2.5pt);
	\draw [fill=black] (1,0) circle (2.5pt);
	\draw [fill=black] (2,0) circle (2.5pt);
	\draw [fill=black] (3,0) circle (2.5pt);
	\draw [fill=black] (4,0) circle (2.5pt);	
	\draw  (0,-1) circle (3pt);
	\draw  (1,-1) circle (3pt);
	\draw  (2,-1) circle (3pt);
	\draw  (3,-1) circle (3pt);
	\draw  (4,-1) circle (3pt);
	\end{tikzpicture}\end{array} \\ \vspace{-.3cm} \\
\begin{array}{ccc}	
		\begin{tikzpicture}[scale=.8]
		\draw [line width=.5pt] (0,0)-- (1,0);
		\draw [line width=.5pt] (1,0)-- (2,0);
		\draw [line width=.5pt] (2,0)-- (3,0);
		\draw [line width=.5pt] (3,0)-- (4,0);
		\draw [line width=.5pt] (2,0)-- (2,1);
		\draw [line width=.5pt] (2,1)-- (3,1);
		\draw [line width=.5pt] (3,-1)-- (4,-1);
		\draw [line width=.5pt] (4,-1)-- (5,-1);
		\draw [line width=.7pt,densely dashed] (3,0)-- (3,-1);
		\draw [fill=black] (0,0) circle (2.5pt);
		\draw [fill=black] (1,0) circle (2.5pt);
	    \draw [fill=black] (2,0) circle (2.5pt);
	   	\draw [fill=black] (3,0) circle (2.5pt);
		\draw [fill=black] (4,0) circle (2.5pt);
		\draw [fill=black] (2,1) circle (2.5pt);
		\draw [fill=black] (3,1) circle (2.5pt);
		\draw (3,-1) circle (3pt);
		\draw (4,-1) circle (3pt);
		\draw (5,-1) circle (3pt);
		\end{tikzpicture}&	
	\begin{tikzpicture}[scale=.8]
\draw [line width=.5pt] (0,0)-- (1,0);
\draw [line width=.5pt] (1,0)-- (2,0);
\draw [line width=.5pt] (2,0)-- (3,0);
\draw [line width=.5pt] (3,0)-- (4,0);
\draw [line width=.5pt] (1,1)-- (2,1);
\draw [line width=.5pt] (2,1)-- (3,1);
\draw [line width=.5pt] (1,1)-- (1,0);
\draw [line width=.5pt] (3,-1)-- (4,-1);
\draw [line width=.5pt] (4,-1)-- (5,-1);
\draw [line width=.7pt,densely dashed] (3,0)-- (3,-1);
\draw [fill=black] (0,0) circle (2.5pt);
\draw [fill=black] (1,0) circle (2.5pt);
\draw [fill=black] (2,0) circle (2.5pt);
\draw [fill=black] (3,0) circle (2.5pt);
\draw [fill=black] (4,0) circle (2.5pt);
\draw [fill=black] (1,1) circle (2.5pt);
\draw [fill=black] (2,1) circle (2.5pt);
\draw [fill=black] (3,1) circle (2.5pt);
\draw (3,-1) circle (3pt);
\draw (4,-1) circle (3pt);
\draw (5,-1) circle (3pt);
	\end{tikzpicture}&
	\begin{tikzpicture}[scale=.8]	
	\draw [line width=.5pt] (0,0)-- (1,0);
	\draw [line width=.5pt] (1,0)-- (2,0);
	\draw [line width=.5pt] (2,0)-- (3,0);
	\draw [line width=.5pt] (3,0)-- (4,0);
	\draw [line width=.5pt] (1,1)-- (2,1);
	\draw [line width=.5pt] (2,1)-- (3,1);
	\draw [line width=.5pt] (1,1)-- (1,0);
	\draw [line width=.5pt] (1,-1)-- (2,-1);
	\draw [line width=.5pt] (2,-1)-- (3,-1);
	\draw [line width=.7pt,densely dashed] (1,0)-- (1,-1);	
	\draw [fill=black] (0,0) circle (2.5pt);
	\draw [fill=black] (1,0) circle (2.5pt);
	\draw [fill=black] (2,0) circle (2.5pt);
	\draw [fill=black] (3,0) circle (2.5pt);
	\draw [fill=black] (4,0) circle (2.5pt);
	\draw [fill=black] (1,1) circle (2.5pt);
	\draw [fill=black] (2,1) circle (2.5pt);
	\draw [fill=black] (3,1) circle (2.5pt);
	\draw (1,-1) circle (3pt);
	\draw (2,-1) circle (3pt);
	\draw (3,-1) circle (3pt);
	\end{tikzpicture}
\end{array}
\end{array}$\\ \hline
\end{tabular}
\caption{Maximal trees up to rank $8$ and their recursive constructions by Theorem~\ref{thm:trees};   the white vertices demonstrate the paths $P_2$, $P_3$, and $P_5$. }
\label{tab:trees}
\end{table}

\section{Maximal generalized friendship graphs}\label{sec:graphs}

Ellingham \cite{ell} constructed three families of maximal graphs.
One of these, was the family of {\em friendship graphs} $F=F(n)$ defined by
\begin{align*}
V(F)&=\{a,b_1,\ldots, b_n,c_1,\ldots, c_n\},\\
E(F)&=\{ab_i,ac_i,b_ic_i \mid 1 \leqslant i \leqslant n \}.
\end{align*}
We extend this family to the {\em generalized friendship graphs}, denoted by $F(k,m)$, which are the graphs obtained by adding a vertex to $m$ disjoint copies of the complete graph $K_k$, and joining it to all the vertices of the copies of $K_k$. The resulting graph has $mk+1$ vertices. The special case $F(2,m)$ is the friendship graph. Also $F(1,m)$ is the star with $m$ edges which is not reduced and thus is not a maximal graph.  Ellingham proved that:

\begin{thm}[\cite{ell}]\label{thm:F(2,m)} The graph $F(2,m)$ is maximal if and only if  $m$ is a square-free integer.
\end{thm}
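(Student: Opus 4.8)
The plan is to reduce maximality of $F=F(2,m)$ to a single Diophantine condition via a Schur-complement argument. Writing $a$ for the centre and $b_i,c_i$ for the vertices of the $i$-th triangle, I would first record that $F$ has full rank: any $\x\in\nul(A(F))$ satisfies $\x(c_i)=\x(b_i)=-\x(a)$ (from the equations at $b_i,c_i$), and then the equation at $a$ gives $-2m\,\x(a)=0$, so $\nul(A(F))=\0$ and $\rk(F)=2m+1$. Consequently, if $H$ is obtained from $F$ by adding a vertex $v$ adjacent to a set $S$ with characteristic column vector $\mathbf{s}$, then
\[
A(H)=\begin{pmatrix}A(F)&\mathbf{s}\\ \mathbf{s}^{\top}&0\end{pmatrix},
\]
and since $A(F)$ is invertible the Schur complement shows $\rk(H)=\rk(F)$ exactly when the scalar $\mathbf{s}^{\top}A(F)^{-1}\mathbf{s}$ vanishes, and $\rk(H)=\rk(F)+1$ otherwise. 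Hence $F$ is \emph{not} maximal iff some $S$ giving a reduced $H$ satisfies $\mathbf{s}^{\top}A(F)^{-1}\mathbf{s}=0$.

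Second, I would make this quadratic form explicit by solving $A(F)\y=\mathbf{s}$ directly: the rows at $b_i,c_i$ give $\y(c_i)=s_{b_i}-\y(a)$ and $\y(b_i)=s_{c_i}-\y(a)$, and the row at $a$ then pins down $\y(a)=(\sigma-s_a)/(2m)$. A short computation yields
\[
\mathbf{s}^{\top}A(F)^{-1}\mathbf{s}=\mathbf{s}^{\top}\y=2\tau-\frac{(\sigma-s_a)^2}{2m},
\]
where $s_a\in\{0,1\}$ records whether $v\sim a$, $\sigma$ is the number of vertices of $S$ among the $b_i,c_i$, and $\tau$ is the number of triangles with both $b_i,c_i\in S$. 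So the rank-preserving condition is the clean equation $(\sigma-s_a)^2=4m\tau$. Classifying the $m$ triangles by how many of their two non-central vertices lie in $S$ — say $p$ with none, $q$ with one, $t$ with both, so $p+q+t=m$, $\sigma=q+2t$, $\tau=t$ — converts this to $(q+2t-s_a)^2=4mt$ in nonnegative integers.

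Third, I would settle which solutions give a reduced $H$ and extract the arithmetic. Since $F$ is already reduced and adding one vertex cannot merge two distinct old neighbourhoods, $H$ fails to be reduced only when $v$ is isolated or becomes a twin of an old vertex; a short case check shows this happens exactly for $S=\emptyset$, $S=\{b_i,c_i:\text{all }i\}$ (twin of $a$), or $S=\{a,b_i\}$/$\{a,c_i\}$ (twin of $c_i$/$b_i$). In the branch $s_a=0$ the equation reads $(q+2t)^2=4mt$; setting $u=q+2t$, the constraint $q\ge0$ becomes $u\le 2m$ while $p\ge0$ is automatic (it rearranges to $(u-2m)^2\ge0$), so an admissible solution is precisely an integer $u$ with $0<u<2m$ and $4m\mid u^2$ — the endpoints $u=0,2m$ being exactly the two forbidden sets above. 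Writing $f(4m)$ for the least positive integer whose square is divisible by $4m$, such a $u$ exists iff $f(4m)<2m$; and computing $f(4m)$ from the prime factorisation (noting $2m$ is always a solution, so $f(4m)\mid 2m$) shows $f(4m)=2m$ precisely when every exponent in $m$ is at most $1$, i.e.\ when $m$ is square-free. This already gives the non-maximal direction: if $m$ is not square-free, $u=f(4m)\le m$ produces a valid $v$.

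Finally, to close the square-free direction I must also dispose of the branch $s_a=1$, where $4m\mid(q+2t-1)^2$ forces $q+2t-1$ to be a multiple of $f(4m)=2m$; the multiple $0$ is exactly the forbidden twin $(q,t)=(1,0)$, the next value $q+2t-1=2m$ already forces $p=-1<0$, and larger multiples force $t>m$, so no admissible solution survives. Combining the two branches yields that $F(2,m)$ is maximal iff $m$ is square-free. I expect the main obstacle to be the bookkeeping in the last two steps: pinning down the precise reducedness exclusions and verifying they coincide with the forbidden endpoints (so that no genuine solution is discarded), together with closing the $s_a=1$ branch cleanly in the square-free case.
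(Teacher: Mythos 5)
Your proof is correct, and it takes essentially the same route that the paper itself uses for the generalized friendship graphs: the paper only cites this statement from Ellingham without reproving it, but its proofs of Theorems~\ref{thm:F(k,m)} and~\ref{thm:F(3,m)} are built from exactly your ingredients — Lemma~\ref{adding vertex} applied to the invertible matrix $A(F)$, an explicit evaluation of $\mathbf{s}^{\top}A(F)^{-1}\mathbf{s}$, and a divisibility analysis of the resulting Diophantine equation. Your condition $(\sigma-s_a)^2=4m\tau$ is precisely the paper's equation~\eqref{eq:gamma} specialized to $k=2$ (with $y_0=s_a$, $\ell=\sigma$, and $\sum_i\g_i(\g_i-1)=2\tau$), and your reducedness exclusions ($S=\emptyset$, the twin of $a$, and the twins of $b_i,c_i$) coincide with the paper's identification of the trivial solutions $\y=\0$ and $\y$ a column of $A$.
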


Our goal in this section is to extend this result to the generalized friendship graphs.
 We start with the following useful lemma.

\begin{lem} [\cite{bbddm}]\label{adding vertex}
	Let $B$ be a symmetric matrix and
	$$A=\left(\begin{array}{ccc|c}
	&  &  &  \\
	& B &  & \y \\
	&  &  &  \\ \hline
	& \y^\top &  & b
	\end{array}\right).$$
	\begin{itemize}
		\item[\rm(i)] If $\y\not\in\col(B)$, then $\rk(A)=\rk(B)+2$.\label{part1}
		\item[\rm(ii)] If $\y\in\col(B)$ with $B\x=\y$ and $b\ne\y^\top\x$, then $\rk(A)=\rk(B)+1$.\label{part2}
		\item[\rm(iii)] If $\y\in\col(B)$ with $B\x=\y$ and $b=\y^\top\x$, then $\rk(A)=\rk(B)$.\label{part3}
	\end{itemize}
\end{lem}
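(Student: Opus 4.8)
The plan is to compute $\rk(A)=\dim\col(A)$ by splitting the column space along the last coordinate direction. Let $\pi\colon\mathbb{R}^{n+1}\to\mathbb{R}^n$ be the projection onto the first $n$ coordinates, so $\ker\pi=\langle\e_{n+1}\rangle$, where $\e_{n+1}$ is the last standard basis vector. The first $n$ columns of $A$ project onto the columns of $B$ and the last column projects onto $\y$, whence $\pi(\col(A))=\col(B)+\langle\y\rangle$. Applying rank--nullity to $\pi$ restricted to $\col(A)$ yields the identity
$$\rk(A)=\dim\bigl(\col(B)+\langle\y\rangle\bigr)+\dim\bigl(\col(A)\cap\langle\e_{n+1}\rangle\bigr),$$
in which the second summand is $0$ or $1$ according to whether $\e_{n+1}\in\col(A)$. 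The whole statement then reduces to two bookkeeping questions: the dimension of $\col(B)+\langle\y\rangle$, and whether $\e_{n+1}$ lies in $\col(A)$. Throughout I would use the symmetry of $B$ in the form $\nul(B)=\col(B)^\perp$.

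The first question is immediate: $\dim(\col(B)+\langle\y\rangle)$ equals $\rk(B)+1$ in case (i) and $\rk(B)$ in cases (ii)--(iii). For the second, I would test when a combination of the columns of $A$ can equal $\e_{n+1}$, i.e.\ solve $B\mathbf{z}+w\y=\0$ together with $\y^\top\mathbf{z}+wb=1$. In case (i), the first equation forces $w=0$ (otherwise $\y=-w^{-1}B\mathbf{z}\in\col(B)$), leaving $\mathbf{z}\in\nul(B)$ with $\y^\top\mathbf{z}=1$; since $\y\notin\col(B)=\nul(B)^\perp$, there is indeed some $\mathbf{z}\in\nul(B)$ with $\y^\top\mathbf{z}\ne0$, and scaling gives $\e_{n+1}\in\col(A)$, so $\rk(A)=(\rk(B)+1)+1$. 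In cases (ii)--(iii), write $\y=B\x$ and $\mathbf{z}=-w\x+\mathbf{u}$ with $\mathbf{u}\in\nul(B)$; symmetry gives $\y^\top\mathbf{u}=\x^\top B\mathbf{u}=0$, so the target equation collapses to $w(b-\y^\top\x)=1$. This is solvable precisely when $b\ne\y^\top\x$, which puts $\e_{n+1}\in\col(A)$ and gives $\rk(A)=\rk(B)+1$ (case (ii)), and unsolvable when $b=\y^\top\x$, giving $\rk(A)=\rk(B)$ (case (iii)).

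The step I expect to be the main obstacle is case (i): a generic bordered matrix only satisfies $\rk(B)+1\le\rk(A)\le\rk(B)+2$, and pinning the jump down to exactly $2$ is where the symmetry of $B$ is indispensable — it is the identity $\nul(B)=\col(B)^\perp$ that produces the required null vector $\mathbf{z}$ of $B$ with $\y^\top\mathbf{z}\ne0$. I would flag that without symmetry this conclusion can fail. As a sanity check and an alternative route for the easier cases, I note that (ii) and (iii) also follow by elementary operations: using $\y=B\x$ to clear the border reduces $A$ to the block-diagonal form $\mathrm{diag}\bigl(B,\;b-\y^\top\x\bigr)$, whose rank is $\rk(B)+1$ or $\rk(B)$ exactly as the scalar $b-\y^\top\x$ is nonzero or zero; this agrees with the unified computation above.
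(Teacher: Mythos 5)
Your proof is correct and complete. There is, however, no argument in the paper to compare it against: Lemma~\ref{adding vertex} is stated with a citation to Bevis, Blount, Davis, Domke and Miller \cite{bbddm} and is never proved in the paper itself, so your write-up supplies a self-contained proof where the authors rely on the literature. Checking your steps: the rank--nullity identity $\rk(A)=\dim\bigl(\col(B)+\langle\y\rangle\bigr)+\dim\bigl(\col(A)\cap\langle\e_{n+1}\rangle\bigr)$ is valid, and it correctly reduces the lemma to computing $\dim\bigl(\col(B)+\langle\y\rangle\bigr)$ and deciding whether $\e_{n+1}\in\col(A)$. You invoke the symmetry of $B$ at exactly the two points where it is indispensable: in case (i), the identity $\nul(B)=\col(B)^\perp$ together with $\y\notin\col(B)$ produces $\mathbf{z}\in\nul(B)$ with $\y^\top\mathbf{z}\ne0$, which is what forces the rank to jump by the full $2$ rather than the generic $1$ or $2$; in cases (ii)--(iii), the computation $\y^\top\mathbf{u}=\x^\top B\mathbf{u}=0$ for $\mathbf{u}\in\nul(B)$ collapses the solvability condition to $w(b-\y^\top\x)=1$, which cleanly separates (ii) from (iii). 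Your closing alternative for (ii)--(iii) is also sound: with $E=\left(\begin{smallmatrix}I&-\x\\ \0^\top&1\end{smallmatrix}\right)$ one gets $E^\top AE=\mathrm{diag}(B,\,b-\y^\top\x)$, and since $E$ is invertible this gives the rank immediately; that congruence reduction, combined with your orthogonality argument for case (i), is essentially the standard proof found in the literature. Your unified projection computation handles all three cases with one bookkeeping device and is, if anything, tidier.
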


\begin{thm}\label{thm:F(k,m)}
Let $k\geqslant2$ and $m\geqslant1$. If $mk$ or  $mk/2$ is a square-free integer, then $F(k,m)$ is a maximal graph.
\end{thm}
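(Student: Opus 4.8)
The plan is to reduce maximality of $F(k,m)$ to a single Diophantine equation through the rank-one update Lemma~\ref{adding vertex}, and then eliminate all but the ``twin'' solutions using the square-free hypothesis. First I would record the linear algebra of $F(k,m)$. Putting the central vertex $a$ first and ordering the rest block by block, the adjacency matrix is $A=\left(\begin{smallmatrix} 0 & \1^\top\\ \1 & B\end{smallmatrix}\right)$ with $B=\bigoplus_{i=1}^m (J_k-I_k)$. Since $J_k-I_k$ has eigenvalues $k-1$ and $-1$, the matrix $B$ is invertible with $B^{-1}\1=\tfrac1{k-1}\1$, and Lemma~\ref{adding vertex}(ii) gives $\rk(F(k,m))=mk+1$; that is, $A$ is nonsingular. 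Consequently, attaching a new vertex $v$ with neighbourhood indicator $\mathbf z=(z_0,\mathbf w)\in\{0,1\}^{mk+1}$ can never fall under Lemma~\ref{adding vertex}(i) (as $\col(A)$ is everything), so the rank is preserved exactly when $\mathbf z^\top A^{-1}\mathbf z=0$ and strictly increases otherwise. Thus $F(k,m)$ is non-maximal iff some $\mathbf z\neq\0$ with $\mathbf z^\top A^{-1}\mathbf z=0$ produces a reduced graph.

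Next I would make $\mathbf z^\top A^{-1}\mathbf z=0$ explicit by solving $A\x=\mathbf z$. Writing $w_i$ for the number of neighbours of $v$ in the $i$-th copy of $K_k$, $W=\sum_i w_i$ and $S_2=\sum_i w_i^2$, the block structure forces $\x$ to take only two values on each block, yields $x_a=\tfrac{N}{mk}$ with $N:=W-(k-1)z_0$, and reduces $\mathbf z^\top\x=0$ to
\[
mk\bigl(S_2-(k-1)W\bigr)=N^2 .
\]
There are two ``trivial'' solutions, both making $v$ a twin and hence $H$ non-reduced: $z_0=0$ with all $w_i=k$ (a copy of $a$, $N=mk$), and $z_0=1$ with one $w_i=k-1$ and the rest $0$ (a copy of a vertex of some $K_k$, $N=0$). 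The entire point is to show these are the only solutions.

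The decisive step is number-theoretic. From the displayed identity any square-free $d\in\{mk,\,mk/2\}$ divides $N^2$, hence divides $N$; and since $0\le W\le mk$, $z_0\in\{0,1\}$ give $-(k-1)\le N\le mk$, the only admissible multiples of $d$ are $N\in\{0,mk\}$ when $d=mk$, and $N\in\{-mk/2,0,mk/2,mk\}$ when $d=mk/2$. For $N=mk$ one gets $W=mk$, forcing all $w_i=k$ (the copy of $a$); for $N=0$ with $z_0=1$ one gets $\sum_iw_i=k-1$ and $\sum_iw_i^2=(k-1)^2$, so the identity $(\sum_iw_i)^2=\sum_iw_i^2$ makes every product $w_iw_j$ vanish and pins down the copy of a block vertex (while $z_0=0$ here gives the forbidden isolated $v$). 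The genuinely new values $N=\pm mk/2$ are where the $mk/2$ hypothesis is used: there $S_2-(k-1)W=mk/4$, so integrality already needs $4\mid mk$ (impossible when $mk/2$ is odd), and when $mk/2=2e$ is even and square-free one checks $S_2-W\equiv e\pmod 2$ with $e$ odd, contradicting the elementary fact that $S_2-W=\sum_i w_i(w_i-1)$ is always even.

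I expect the main obstacle to be exactly this case $N=\pm mk/2$: unlike the divisibility step it is \emph{not} ruled out by integrality alone, and it requires combining the parity invariant $\sum_i w_i(w_i-1)\equiv0\pmod2$ with the fact that square-freeness of $mk/2$ forces $mk/4$ to be odd. (The value $N=-mk/2$ can occur only for $m=1$, where the same parity obstruction applies.) Once the three cases are settled, every rank-preserving attachment is a twin and therefore yields a non-reduced $H$; hence no reduced extension of $F(k,m)$ has the same rank, and $F(k,m)$ is maximal.
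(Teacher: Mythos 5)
Your proposal is correct and is essentially the paper's own proof: both invoke Lemma~\ref{adding vertex} to reduce maximality to showing that the only $(0,1)$-solutions of $mk\bigl(\sum_i\g_i^2-(k-1)\ell\bigr)=\bigl(\ell-(k-1)y_0\bigr)^2$ (your single equation in $N=W-(k-1)z_0$; the paper's \eqref{eq:gamma}, split there into \eqref{eq:y0=0} and \eqref{eq:y0=1}) are $\0$ and the columns of $A$, using the square-free divisibility $d\mid N^2\Rightarrow d\mid N$ together with a parity argument based on $\sum_i\g_i^2\equiv\sum_i\g_i\pmod 2$ to handle the case $4\mid mk$. Your only departures are organizational: you solve $A\x={\bf z}$ blockwise instead of writing out $A^{-1}$ explicitly, and you keep the divisor $mk/2$ and eliminate $N=\pm mk/2$ by the parity of $\sum_i w_i(w_i-1)$, where the paper instead upgrades the divisibility to $mk\mid N$ (via an argument mod $8$) and then concludes from the range of $N$.
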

\begin{proof}{We fix $k\geqslant2$ and $m\geqslant1$. Let $A$ be the adjacency matrix of $F(k,m)$.
We  write $A$ as
$$A=\left(\begin{array}{c|cccc}
0&\1_k^\top&\1_k^\top&\cdots&\1_k^\top\\ \hline
\1_k &J_k-I_k& O  & \cdots&O \\
\1_k&O&J_k-I_k&  \cdots&O \\
\vdots&\vdots  &&\ddots   &\vdots \\
\1_k&O& O &\cdots & J_k-I_k
\end{array}\right),$$
where $J_k$ is the all $1$'s $k\times k$ matrix and $\1_k$ is the all $1$'s vector of length $k$.
(We remove the subscript $k$ in what follows as it is clear from the context.)
It is straightforward to see that $A$ is invertible with
$$A^{-1}=\frac1d
\left(\begin{array}{c|ccccc}
-a^2&a\1^\top&a\1^\top&a\1^\top&\cdots&a\1^\top\\ \hline
a\1 &bJ-dI& -J & -J & \cdots&-J \\
a\1&-J&bJ-dI& -J & \cdots&-J \\
a\1&-J&-J& bJ-dI & \cdots&-J \\
\vdots&\vdots  &\vdots&&\ddots   &\vdots \\
a\1&-J& -J &-J  &\cdots & bJ-dI
\end{array}\right),$$
where $a=k-1$, $b=mk-1$, and $d=mk(k-1)$.

Let  $\y\in\col(A)$ be a $(0,1)$-vector with $A\x=\y$ and $\x^\top A\x=0$.
We show that $\y=\0$ or $\y$ is a column of $A$. This, in view of  Lemma~\ref{adding vertex}, implies that $F(k,m)$ is a maximal graph.
Let us partition $\x$ and $\y$  as
$$\x =\left(\begin{array}{c}
x_0\\\x_1\\\vdots\\ \x_m
 \end{array}\right)~~\hbox{and}~~
\y =\left(\begin{array}{c}
y_0\\\y_1\\\vdots\\ \y_m
 \end{array}\right),$$
where   $\x_1,\ldots,\x_m,$ $\y_1,\ldots,\y_m$ are vectors of length $k$.
Let $\g_i$ be the number of $1$'s in $\y_i$, that is $\g_i=\y_i^\top\1$, and thus $\y_i^\top J\y_j=\g_i\g_j$.
We have
\begin{align*}
0&=d\x^\top A\x\\
&=d\y^\top A^{-1}\y\\
&=-a^2y_0^2+2ay_0\sum_{i=1}^m\y_i^\top\1+\sum_{i=1}^m\y_i^\top (bJ-dI)\y_i-2\sum_{1\leqslant i<j\leqslant m}\y_i^\top J\y_j\\
&=-(k-1)^2y_0^2+2(k-1)y_0\sum_{i=1}^m\g_i+\sum_{i=1}^m\left((mk-1)\g_i^2-mk(k-1)\g_i\right)-2\sum_{1\leqslant i<j\leqslant m}\g_i\g_j.
\end{align*}
Therefore,
\begin{equation}\label{eq:gamma}
-(k-1)^2y_0^2+\sum_{i=1}^m\left( mk\g_i^2-mk(k-1)\g_i+2(k-1)y_0\g_i\right)-\left(\sum_{i=1}^m\g_i\right)^2=0.
\end{equation}

First, assume that $y_0=0$. Let
$$\ell=\sum_{i=1}^m\g_i.$$
Then from \eqref{eq:gamma}  it follows that
\begin{equation}\label{eq:y0=0}
mk\left(\sum_{i=1}^m\g_i^2-(k-1)\ell\right)-\ell^2=0.
\end{equation}
We claim that $mk\mid\ell$.
From \eqref{eq:y0=0}, it is seen that $mk\mid\ell^2$. Now, if $mk$ is square-free, then we must have $mk\mid\ell$ and we are done.
So let $mk$ be even with $mk/2$ square-free. If $4\nmid mk$, then $mk$ is square-free and again we are done.
Hence we can assume that $4\mid mk$. Thus $8\nmid mk$ since $mk/2$ is square-free. Assume that $mk=4n_0$.
 From \eqref{eq:y0=0}, we have
 $n_0\mid\ell^2$, and since $n_0$ is square-free,  $n_0\mid\ell$.
From \eqref{eq:y0=0}, it is clear that $\ell$ is even. It turns out that $\sum_{i=1}^m\g_i^2$ is also even.
Hence the first term of \eqref{eq:y0=0} is divisible by $8$, and so $8\mid\ell^2$. This yields $4\mid\ell$
which in turn implies that $mk=4n_0\mid\ell$, and the claim follows. Note that $\g_i\leqslant k$ for $i=1,\ldots,m$ and thus $\ell\leqslant mk$.
Hence $\ell=0$ or $\ell=mk$. If $\ell=0$, then $\y=\0$.
If $\ell=mk$, then $\g_1=\cdots=\g_m=k$, and so $\y_1=\cdots=\y_m=\1$, which means that $\y$ is the first column of $A$.

Next, assume that $y_0=1$. From \eqref{eq:gamma}  it follows that
\begin{equation}\label{eq:y0=1}
mk\left(\sum_{i=1}^m\g_i^2-(k-1)\ell\right)-(\ell-(k-1))^2=0.
\end{equation}
It is clear that $mk\mid(\ell-(k-1))^2$.
If $mk$ is square-free, then $mk\mid\ell-(k-1)$. If $mk$ is even with $mk/2$ square-free, then, as in the previous case,
we may suppose that $mk=4n_0$ for some odd integer $n_0$. From \eqref{eq:y0=1}, it is seen that $\ell-(k-1)$ is even.
 It follows that either both $\ell$ and $k-1$ are even or both are odd.
 As the parity of $\sum_{i=1}^m\g_i^2$ and $\ell$ are the same, we see that $\sum_{i=1}^m\g_i^2-(k-1)\ell$ is also even. Hence from \eqref{eq:y0=1} we have that $8\mid(\ell-(k-1))^2$ and so $4\mid\ell-(k-1)$.
Therefore,  $mk=4n_0\mid\ell-(k-1)$. Since $\ell-(k-1)<mk$, it follows that $\ell=k-1$.
Plugging in this into \eqref{eq:y0=1}, we obtain
$$\sum_{i=1}^m\g_i^2=(k-1)^2=\left(\sum_{i=1}^m\g_i\right)^2.$$
This is only possible if exactly one of $\g_i$'s is $k-1$ and the rest are zero.
Consequently, exactly one of the $\y_i$'s is a column of $J-I$, and the rest are $\0$.
This means that $\y$ is the $i$-th column of $A$ for some $2\leqslant i\leqslant mk+1$.
}\end{proof}

Now, we consider the converse of Theorem~\ref{thm:F(k,m)} which holds for $k=2$ by Theorem~\ref{thm:F(2,m)}.
We prove it for $k=3$ in the following theorem. The case $k\geqslant4$ will be discussed afterwards.

\begin{thm}\label{thm:F(3,m)} The graph $F(3,m)$ is maximal if and only if $3m$  or  $3m/2$ is a square-free integer.
\end{thm}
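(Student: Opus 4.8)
The statement splits into an ``if'' and an ``only if''. The ``if'' direction is immediate from Theorem~\ref{thm:F(k,m)} with $k=3$, so the plan is to prove only the converse, and I would argue by contraposition: assuming that neither $3m$ nor $3m/2$ is a square-free integer, I will show that $F(3,m)$ is not maximal by exhibiting a reduced graph of the same rank in which $F(3,m)$ sits as a one-vertex-deleted induced subgraph. Writing $A=A(F(3,m))$, which is invertible by the computation in the proof of Theorem~\ref{thm:F(k,m)}, attaching a new vertex with $(0,1)$ adjacency vector $\y$ leaves the rank unchanged exactly when $\y^\top A^{-1}\y=0$, by Lemma~\ref{adding vertex}(iii) (the new diagonal entry is $0$). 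The enlarged graph is reduced precisely when $\y\neq\0$ and $\y$ is not a column of $A$. Hence everything reduces to producing a single $(0,1)$-vector $\y$, distinct from $\0$ and from every column of $A$, with $\y^\top A^{-1}\y=0$.

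To make this tractable I would reuse the reduction already carried out for Theorem~\ref{thm:F(k,m)}: the value $\y^\top A^{-1}\y$ depends on $\y$ only through $y_0$, the apex entry, and the counts $\g_i=\y_i^\top\1\in\{0,1,2,3\}$ of $1$'s in each copy of $K_3$. Combining \eqref{eq:y0=0} and \eqref{eq:y0=1} (the cases $y_0=0$ and $y_0=1$) into the single identity $3m\,Q=(\ell-2y_0)^2$, where $\ell=\sum_i\g_i$ and $Q=\sum_i\g_i^2-2\ell=3n_3-n_1$ with $n_j$ the number of copies carrying $\g_i=j$, the problem becomes purely arithmetic: find $y_0\in\{0,1\}$ and nonnegative integers $n_1,n_2,n_3$ with $n_1+n_2+n_3\le m$ solving $3mQ=(\ell-2y_0)^2$ whose count pattern is neither the trivial one ($n_1=n_2=n_3=0$, $y_0=0$) nor that of a column ($n_3=m$, $y_0=0$, or $n_2=1$, $n_1=n_3=0$, $y_0=1$). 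Any admissible pattern is realizable by a genuine $(0,1)$-vector, and lying outside the two listed patterns guarantees that $\y$ is not a column.

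For the construction I would set $3m=f^2g$ with $g$ square-free and choose $Q=g$, so that $3mQ=(fg)^2$ is a perfect square and $\ell-2y_0=fg$; solving the two linear relations for $n_1,n_2,n_3$ with $n_3=t$ as a free parameter gives $n_1=3t-Q$ and $n_2=\tfrac12(\ell+Q)-3t$, so a solution exists as soon as $\ell+Q$ is even and the interval $[\,Q/3,\ (\ell+Q)/6\,]$ contains an admissible integer $t$ keeping $n_1+n_2+n_3\le m$. Since failure of the square-free condition forces $3m$ to be non-square-free, and hence $f\ge2$, I would split into the two precise ways the condition can fail---either the odd part of $3m$ is non-square-free, i.e.\ some odd prime $p$ has $p^2\mid 3m$ (take $Q=3m/p^2$, $\ell=3m/p$), or $8\mid m$ (absorb the surplus factor $2$ into $g$)---and in each case verify $n_1,n_2,n_3\ge0$, the per-copy bound $\g_i\le3$, and $n_1+n_2+n_3\le m$.

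The hard part will be the boundary bookkeeping, and it is genuinely delicate because the construction must fail exactly when $3m$ or $3m/2$ \emph{is} square-free (e.g.\ $F(3,4)$ is maximal even though $3m=12$ is non-square-free, since $3m/2=6$ is square-free). Two guards enforce this: the parity requirement that $\ell+Q$ be even, and the need to avoid the column patterns. For small $m$ the choice $y_0=0$ can leave the interval for $t$ empty---for $m=3$ one gets $Q=1$, $\ell=3$, which forces $t=0$ against $n_1=3t-1\ge0$---and one must pass to $y_0=1$, enlarging $\ell$ by $2$ and restoring feasibility (for $m=3$ this yields $n_1=2$, $n_3=1$, i.e.\ $\g=(1,1,3)$). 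Making a uniform choice between $y_0=0$ and $y_0=1$, and, when $\ell+Q$ has the wrong parity, rescaling $Q$ to $gj^2$ for a suitable $j$, while checking that no chosen pattern collapses to a column, is where the technical effort concentrates; the refined failure hypothesis is precisely what supplies the slack needed for a legitimate choice to exist.
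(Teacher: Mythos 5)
Your reduction is sound and is in fact the same one the paper uses: by Lemma~\ref{adding vertex} and the invertibility of $A$, non-maximality of $F(3,m)$ is equivalent to the existence of a $(0,1)$-vector $\y$, other than $\0$ and the columns of $A$, with $\y^\top A^{-1}\y=0$, and this reduces to the Diophantine equations \eqref{eq:y0=0} and \eqref{eq:y0=1} in the counts $\g_i$; your single identity $3mQ=(\ell-2y_0)^2$ with $Q=\sum_i\g_i^2-2\ell=3n_3-n_1$ is a correct repackaging of both, your two failure cases (an odd prime $p$ with $p^2\mid 3m$, or $8\mid m$) do exhaust the complement of the square-free hypothesis, and the linear system $n_1=3t-Q$, $n_2=(\ell+Q)/2-3t$, $n_3=t$ is algebraically right.

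The genuine gap is that the argument stops exactly where the content of the ``only if'' direction lies: you never verify that in each failure case an admissible $t$ actually exists, i.e.\ that $\ell+Q$ is even, that the interval $[Q/3,(\ell+Q)/6]$ contains an integer, that $n_0=m-n_1-n_2-n_3\geqslant0$, and that the resulting pattern is not a column pattern. These checks are not routine window dressing: the parity condition genuinely fails for natural choices (e.g.\ $m=16u$ with $u$ odd gives $Q=g=3u$, $\ell=12u$, so $\ell+Q$ is odd and the rescaling $Q\mapsto4g$ is mandatory, not optional), and for $p=3$ with $m\leqslant6$ the interval is too short to be handled by a length estimate, forcing either $y_0=1$ (your $m=3$ computation) or an ad hoc check ($m=6$). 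As written, only $m=3$ is settled; for every other failing $m$ the existence of a non-trivial solution --- which is precisely the assertion that $F(3,m)$ is not maximal --- is announced as ``where the technical effort concentrates'' rather than proved. The checks can be pushed through (with $Q=3m/p^2$, $\ell=3m/p$, parity is automatic for odd $p$ and the sum bound reduces to $(p-1)^2\geqslant0$), so the approach is viable, but the paper short-circuits all of this by simply exhibiting explicit solution families in Table~\ref{tab:solutions} --- $(a_1,a_2,a_3)=(1,1,3)$ for $m=6$, $(3,0,3)$ for $m=8$, $(t-2,t+1,t-1)$ for $m=6t-3$, and $(3t,0,4t^2+t)$ for $m=(2t+1)^2$ --- combined with the scaling rule \eqref{eq:m->mt}, so that the whole verification amounts to substitution into \eqref{eq:ai}. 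To complete your proof you would need to supply, for each of your two cases, the same kind of explicit certificate (a concrete admissible $t$, with the parity fix and the small-$m$ exceptions handled), at which point your parametric scheme would be a legitimate, somewhat more systematic alternative to the paper's table.
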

\begin{proof}{If  $3m$ is square-free or $m$ is even with $3m/2$  square-free, by Theorem~\ref{thm:F(k,m)}, $F(3,m)$ is maximal.
The remaining values of $m$ are those divisible by $3$, by $8$, or by a square of an odd integer.
We show that for these values of $m$, $F(3,m)$ is not maximal.
 In view of Lemma~\ref{adding vertex}, proving that $F(3,m)$ is not maximal  amounts to finding a $(0,1)$-vector $\y\in\col(A)$ with $\y^\top A^{-1}\y=0$ such that $\y$ is neither $\0$ nor a column of $A$. Since $A$ is invertible, any $\y$ belongs to $\col(A)$.
To have $\y^\top A^{-1}\y=0$, it suffices to find a solution for \eqref{eq:gamma}, equivalently for \eqref{eq:y0=0} if $y_0=0$ or for \eqref{eq:y0=1} if $y_0=1$.
 Note that the columns of $A$ provide solutions for \eqref{eq:gamma} with $y_0=1$ and exactly one of $\g_1,\ldots,\g_m$ is equal to $2$  and the rest to $0$ or $y_0=0$, and $\g_1=\cdots=\g_m=3$. To complete the proof, we find non-zero solutions other than those coming from the columns of $A$.

For $m=3$ and $y_0=1$,  $\g_1=3,\g_2=\g_3=1$ satisfies \eqref{eq:y0=1}.
In our solutions for other values of $m$,  $y_0=0$. So we consider \eqref{eq:y0=0} with $k=3$.
 Note that $0\leqslant\g_i\leqslant3$.
To simplify  \eqref{eq:y0=0},  let $a_r$ be the number of $\g_i$, $1\leqslant i\leqslant m$, which are equal to $r$ for $r=0,1,2,3$.
Therefore, we may write \eqref{eq:y0=0}  as
\begin{equation}\label{eq:ai}
3m(a_1+4a_2+9a_3)-6m(a_1+2a_2+3a_3)-(a_1+2a_2+3a_3)^2=0.
\end{equation}
We observe that
\begin{equation}\label{eq:m->mt}
 \hbox{if $(m,a_1,a_2,a_3)$ is a sulotion to \eqref{eq:ai}, then so is $(mb,a_1b,a_2b,a_3b)$ for any $b\geqslant1$.}
 \end{equation}
\begin{table}[ht]
\centering
\begin{tabular}{cccc}
\hline
$m$ & $a_1$ & $a_2$ & $a_3$  \\ \hline
6 & 1&1&3\\
8&3&0&3\\
$6t-3$&$t-2$&$t+1$&$t-1$\\
$(2t+1)^2$&$3t$&0&$4t^2+t$\\
\hline
\end{tabular}
\caption{Some solutions to Equation \eqref{eq:ai}}
\label{tab:solutions}
\end{table}

If $m>3$ is divisible by $3$, then $m=6t$ for $t\geqslant1$ or $m=6t-3$ for $t\geqslant2$.
For $m=6$, a solution to \eqref{eq:ai} is given in Table~\ref{tab:solutions}. This together with \eqref{eq:m->mt} gives a solution for any $m=6t$.
For $m=6t-3$ with $t\geqslant2$, a solution to \eqref{eq:ai} is given in Table~\ref{tab:solutions}.
If $m=8t$, then a solution is obtained by the solution for $m=8$ given in Table~\ref{tab:solutions} and employing \eqref{eq:m->mt}.
If $m$ is a multiple of a square of odd integer $(2t+1)^2$, again a solution is obtained from Table~\ref{tab:solutions} and \eqref{eq:m->mt}.
}\end{proof}

Finally, we show that if $m$ is large enough in terms of $k$, then the converse of Theorem~\ref{thm:F(k,m)} holds, that is there are no maximal graphs $F(k,m)$ besides those given in Theorem~\ref{thm:F(k,m)}.

\begin{thm}\label{thm:nonMaximalF(k,m)}
 Let $k\geqslant2$ and $m\geqslant1$.
 If $mk$ is divisible by a square of an odd integer or divisible by $8$, and $m\geqslant\frac{(5k^2-19k+20)^2}{4k}$, then $F(k,m)$ is not maximal.
\end{thm}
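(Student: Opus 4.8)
The plan is to use the criterion established in the proof of Theorem~\ref{thm:F(3,m)}: by Lemma~\ref{adding vertex}, $F(k,m)$ fails to be maximal precisely when there is a $(0,1)$-vector $\y$ with $\y^\top A^{-1}\y=0$ that is neither $\0$ nor a column of $A$. Since the left-hand side of \eqref{eq:gamma} depends on $\y$ only through $y_0$ and the block sums $\g_i=\y_i^\top\1\in\{0,\dots,k\}$, it suffices to exhibit admissible integers $(y_0;\g_1,\dots,\g_m)$ solving \eqref{eq:gamma} that do not arise from $\0$ or from a column. I would set $y_0=0$ and pass to \eqref{eq:y0=0}. Writing $\ell=\sum_i\g_i$ and $s=\sum_i\g_i^2$, equation \eqref{eq:y0=0} reads $mk\,s=\ell\big(mk(k-1)+\ell\big)$, so the requirement is that $mk\mid\ell^2$ and $s=(k-1)\ell+\ell^2/(mk)$.

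Next I would parametrize the admissible sums. Writing $mk=u^2v$ with $v$ squarefree, the hypothesis that $mk$ is divisible by an odd square or by $8$ forces $u\ge2$, while $mk\mid\ell^2$ holds exactly when $uv\mid\ell$; hence the admissible values with $0<\ell<mk$ are $\ell=juv$ for $1\le j\le u-1$. A direct substitution then yields the two identities
\[
s-\ell=jv\big((k-2)u+j\big),\qquad k\ell-s=j(u-j)v\ge0,
\]
which will control the realization.

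The realization I have in mind uses a three-valued vector: take $a$ blocks equal to $k$, $b$ blocks equal to $1$, and the rest equal to $0$. Solving $ak+b=\ell$ and $ak^2+b=s$ gives $a=\frac{s-\ell}{k(k-1)}$ and $b=\frac{k\ell-s}{k-1}=\frac{j(u-j)v}{k-1}$, both non-negative by the identities above. The key simplification is that the feasibility constraint $a+b\le m$ is then automatic: one computes $a+b=\frac{2\ell-\ell^2/(mk)}{k}$, and $a+b\le m$ is equivalent to $(mk-\ell)^2\ge0$. Thus the only genuine condition left is the single divisibility $k(k-1)\mid s-\ell$, i.e. $k(k-1)\mid jv\big((k-2)u+j\big)$; any $j\in\{1,\dots,u-1\}$ meeting it produces an admissible $\y$ which, because $0<\ell<mk$, is neither $\0$ nor a column of $A$, and so witnesses non-maximality. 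When no choice with middle value $1$ is convenient, I would allow a general middle value $c\in\{1,\dots,k-1\}$, which merely replaces the modulus $k(k-1)$ by $c(k-c)$ and gives extra flexibility.

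The crux, and the step I expect to be hardest, is to guarantee that such a $j$ exists under the stated bound. The inequality $m\ge\frac{(5k^2-19k+20)^2}{4k}$ is equivalent to $2u\sqrt v\ge 5k^2-19k+20$, and I would split the argument according to the size of $u$ relative to $k^2$. When $u\ge k(k-1)+1$ one may simply take $j=k(k-1)$, for which $k(k-1)\mid jv\big((k-2)u+j\big)$ is immediate; when $u$ is smaller, the inequality forces the complementary factor $v$ to be correspondingly large, and one exploits this together with the freedom in $c$ to meet the divisibility with a small $j$. The delicate point is to check that the explicit quadratic threshold is calibrated exactly so as to make one of these choices available for the single pair $(u,v)$ determined by $mk$; once this is verified, the remaining ingredients—the reduction to \eqref{eq:y0=0}, the two identities, and the automatic bound $a+b\le m$—are routine, and the conclusion follows as in the case $k=3$ of Theorem~\ref{thm:F(3,m)} (cf. the scaling remark \eqref{eq:m->mt}).
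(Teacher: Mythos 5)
Your reduction is sound and coincides with the paper's own first steps: non-maximality follows from a nontrivial $(0,1)$-solution of \eqref{eq:y0=0}; the admissible sums are exactly $\ell=juv$, $1\leqslant j\leqslant u-1$ (writing $mk=u^2v$ with $v$ squarefree); and your identities $s-\ell=jv\bigl((k-2)u+j\bigr)$, $k\ell-s=j(u-j)v$, as well as the neat observation that $a+b\leqslant m$ reduces to $(mk-\ell)^2\geqslant0$, are all correct. The gap is the step you yourself flag as the crux, and it is not merely unproven but unfixable in the form you propose. With blocks taking only the two values $\{c,k\}$, integrality of the middle-value multiplicity requires $c(k-c)\mid k\ell-s=j(u-j)v$, and there are only $u-1$ admissible values of $j$. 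The hypothesis $m\geqslant\frac{(5k^2-19k+20)^2}{4k}$ forces $u\sqrt{v}$ to be large, \emph{not} $u$: it allows $u$ to stay bounded while $v$ is a huge prime, and a huge prime factor $v$ contributes nothing to the divisibility. Concretely, take $k=9$ and $m=p$ a prime exceeding $1793$ and coprime to $210$. Then $mk=9p$, so $u=3$, $v=p$; the hypothesis holds ($9\mid mk$) and your bound holds, but the only admissible sums are $\ell=3p,6p$ ($j=1,2$), for which $k\ell-s=j(u-j)v=2p$ in both cases; hence $b=2p/\bigl(c(9-c)\bigr)$ is never an integer, since $c(9-c)\in\{8,14,18,20\}$ is coprime to $p$ and greater than $2$. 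So no two-valued vector exists, and your fallback cases also collapse: $j=k(k-1)$ needs $u>k(k-1)$, which the bound does not provide, and "large $v$" gives no leverage.

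The paper escapes exactly this trap by using \emph{four} values $1,2,k-1,k$, with multiplicities $u,v,w,t$ (its $u,v$ are multiplicities, not your square decomposition), and by fixing $\ell=cq$ where $mk=cq^2$ with $q$ odd, or $q=2$ and $c$ even. The two small multiplicities are free parameters used to solve the pair of congruences $2v\equiv-c(q-1)\pmod{k-1}$ and $2u+6v\equiv c\pmod{k}$, which are always solvable with $u,v$ of size $O(k)$ (the parity bookkeeping for $c$ is what makes division by $2$ legitimate); then $w$ and $t$ are automatically integers, the lower bound on $m$ is used solely to guarantee $w\geqslant0$, and $u+v+w+t<m$ is routine. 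In your language: one needs at least two independent small values so that integrality becomes a pair of solvable congruences modulo $k$ and $k-1$, rather than a single rigid divisibility condition on the scarce parameter $j$. Your framework, in particular the automatic feasibility computation, is worth keeping, but the realization must be enriched in this way for the proof to go through.
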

\begin{proof}{ For $k=2,3$, the result follows from Theorems~\ref{thm:F(2,m)} and \ref{thm:F(3,m)}. So we assume that $k\geqslant4$.

Similar to the proof of Theorem~\ref{thm:F(3,m)}, our goal is to find solutions to \eqref{eq:y0=0} with $y_0=0$.
 Note that columns of $A$ provide the (trivial) solution
 $\g_1=\cdots=\g_m=k$  to \eqref{eq:y0=0}. To complete the proof, we find non-zero solutions other than this trivial one.

By the assumption, we may write  $mk=cq^2$ for some positive integers $c,q$, where either $q$ is odd, or $q=2$ and $c$ is even.
This in turn implies that whenever $mk$ is even, then $c$ is also even.
If $(\g_1,\ldots,\g_m)$ is a solution to \eqref{eq:y0=0},  then $mk$ divides $\ell^2$. So we will look for a solution with $\ell=cq$.
We observe that if $(\g_1,\ldots,\g_m)$ satisfies
\begin{align*} \sum_{i=1}^m\g_i&= cq,  \\
 \sum_{i=1}^m\g_i^2&=(k-1)cq+c,
 \end{align*}
then it is a solution for \eqref{eq:y0=0}.
 We will show that there is a solution containing only $0$'s, $1$'s, $2$'s, $(k-1)$'s, and $k$'s, i.e.
\begin{equation}\label{eq:sys}
\left\{\begin{array}{l}
  u+2v+(k-1)w+kt=cq,\\
 u+4v+(k-1)^2w+k^2t=(k-1)cq+c,
 \end{array}\right.
 \end{equation}
where $u,v,w,t$ are the multiplicities of $1$'s, $2$'s, $(k-1)$'s, $k$'s, respectively.
Solving \eqref{eq:sys} in $w$ and $t$, yields
\begin{equation}\label{eq:w,t}w:=\frac{c(q-1)-(k-1)u-2(k-2)v}{k-1},\quad t:=\frac{c+(k-2)u+2(k-3)v}{k}.\end{equation}
It follows that \eqref{eq:sys} has an integer solution whenever
\begin{align*}
(k-1)&\mid c(q-1)-(k-1)u-2(k-2)v,\\
k&\mid c+(k-2)u+2(k-3)v,
\end{align*}
that is
\begin{equation}\label{eq:u,v}
\left\{\begin{array}{l} 2v\equiv-c(q-1)\pmod{k-1},\\
2u+6v\equiv c\pmod k.\end{array}\right.
\end{equation}

If $k$ is even, then, as noted above, $c$ is also even.
Therefore, we have the following solution for \eqref{eq:u,v}:
$$v:=-\frac{c(q-1)}{2}\pmod{k-1},\quad u:=\frac{c}{2}-3v\pmod{k/2}.$$
For odd $k$, either $q$ is odd, or $q=2$ in which case $c$ is even. Hence $c(q-1)/2$ is an integer and $c/2$ exists mod $k$. Thus the following gives a solution for \eqref{eq:u,v}:
$$v:=-\frac{c(q-1)}{2}\pmod{(k-1)/2},\quad u:=\frac{c}{2}-3v\pmod k.$$
From \eqref{eq:w,t}, it  follows that $t$ is always positive.
 Further, we have either
 \begin{equation}\label{ineq:u,v}
    0\leqslant u\leqslant k/2-1,\, 0\leqslant v\leqslant k-2,\quad\hbox{or}\quad 0\leqslant u\leqslant k-1,\, 0\leqslant v\leqslant (k-3)/2.
 \end{equation}
 Hence $(k-1)u+2(k-2)v$ is at most $(k/2-1)(5k-9)$ for $k\geqslant4$. It turns out that $w\geqslant0$ since
 $$c(q-1)-(k-1)u-2(k-2)v\geqslant\left(\sqrt{mk}-1\right)-(k/2-1)(5k-9)\geqslant0,$$
 where the last inequality holds for $m\geqslant\frac{(5k^2-19k+20)^2}{4k}$.
 It remains to  verify that $u+v+w+t<m$:
    from the first equation of \eqref{eq:sys},
    $$w+t<\frac{cq}{k-1}=\frac{mk}{q(k-1)}\leqslant\frac{mk}{2(k-1)}\leqslant\frac{2m}{3},$$
and from \eqref{ineq:u,v},
$$u+v\leqslant\frac{3k-5}{2}<\frac{m}{3}.$$
Consequently, we obtain a solution of \eqref{eq:y0=0} different from the trivial one.
}\end{proof}

We expect that the condition on $m$ in Theorem~\ref{thm:nonMaximalF(k,m)} can be improved by considering solutions of \eqref{eq:y0=1}. However, it cannot be removed completely. As a matter of fact, in many cases, the assertion does not hold when $m$ is small. By a computer search, we found all the solutions of \eqref{eq:y0=0} and \eqref{eq:y0=1} for $k\leqslant15,m\leqslant100$. As a result, we come up with several couples $(m,k)$ such that $mk$ is divisible by $8$ or by a square of an odd integer but $F(k,m)$ is maximal; see Table~\ref{tab:MaximalF(k,m)}.
\begin{table}[ht]
\centering
\begin{tabular}{cc}
\hline
$k$ & $m$  \\ \hline
$4$ & $2$\\
$5$ &  | \\
$6$ & $3,4$\\
$7$ & $8,9$\\
$8$ & $2,3,4,5,9$\\
$9$ & $2,3,5,6,7,8$\\
\hline
\end{tabular}
\qquad
\begin{tabular}{cc}
\hline
$k$ & $m$  \\ \hline
$10$ & $4,5,8,9$\\
$11$ & $8,9,16,18$\\
$12$ & $2,3,4,6,8,9,10$\\
$13$ & $8,9,16,18$\\
$14$ & $4,8,9,12,16,18$\\
$15$ & $3,5,6,8,9,10,12,16,18$\\
\hline
\end{tabular}
\caption{The list of $k\leqslant15,m\leqslant100$ such that $mk$ is divisible by $8$ or by a square of an odd integer yet $F(k,m)$ is maximal}
\label{tab:MaximalF(k,m)}
\end{table}

In the next theorem, under certain conditions, we prove the fact suggested by Table~\ref{tab:MaximalF(k,m)} for $m\leqslant12$.
\begin{thm} If $mk=8q$ with $m\leqslant12$, $k\geqslant11$, and $q$ a square-free odd integer, then  $F(k,m)$ is a maximal graph.
\end{thm}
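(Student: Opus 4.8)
The plan is to run the same $\col(A)$/null-space machinery as in the proof of Theorem~\ref{thm:F(k,m)}. Since $A$ is invertible, Lemma~\ref{adding vertex} shows that $F(k,m)$ is maximal unless there is a $(0,1)$-vector $\y$ with $\y^\top A^{-1}\y=0$ that is neither $\0$ nor a column of $A$; as before, this is equivalent to a nontrivial solution of \eqref{eq:y0=0} (if $y_0=0$) or \eqref{eq:y0=1} (if $y_0=1$), where $\g_i$ counts the $1$'s in the $i$-th block and $\ell=\sum_{i=1}^m\g_i$. So I would show that, under $mk=8q$ with $q$ odd and square-free, both equations force $\y$ to be $\0$ or a column of $A$. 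The arithmetic input is exactly that of Theorem~\ref{thm:F(k,m)}: from $mk\mid\ell^2$ one gets $q\mid\ell$ and $4\mid\ell$, hence $4q\mid\ell$. The novelty here is that $mk/2=4q$ is \emph{not} square-free, so one cannot upgrade this to $8q\mid\ell$; since $0\le\ell\le mk=8q$, the only possibilities are $\ell\in\{0,4q,8q\}$. The values $\ell=0$ and $\ell=8q$ give $\y=\0$ and the first column of $A$, so the entire content for $y_0=0$ is to eliminate $\ell=4q$. (The case $m=1$, where $F(k,1)=K_{k+1}$, is checked directly.)

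Assume $\ell=4q$. Substituting into \eqref{eq:y0=0} yields $\sum_i\g_i^2=2q(2k-1)$, equivalently
\[
\sum_{i=1}^m \g_i(k-\g_i)=2q=\frac{mk}{4}.
\]
This is the decisive identity. Every $\g_i\notin\{0,k\}$ contributes at least $k-1$ to the left-hand side, so the number $j$ of such ``partial'' blocks satisfies $j(k-1)\le mk/4$; with $m\le 12$ and $k\ge 11$ this gives $j\le 3$. Writing $p_1,\dots,p_j\in\{1,\dots,k-1\}$ for the partial values and $P=\sum_t p_t$, reducing $\ell=4q=mk/2$ modulo $k$ shows $P\equiv 0\pmod k$ for even $m$ and $P\equiv k/2\pmod k$ for odd $m$; together with $P\le j(k-1)<jk$ this leaves only a handful of values of $P$. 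For each admissible pair $(j,P)$ the identity forces the exact value $\sum_t p_t^2=kP-mk/4$, and I would compare it with the top of the spectrum of $\sum_t p_t^2$ over integers in $[1,k-1]$ with sum $P$. In each case the forced value turns out to be either strictly larger than the maximum of that spectrum or strictly between its two largest attained values, both impossible.

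The step I expect to be the main obstacle is precisely this last comparison in the near-extremal configurations, above all the balanced case $j=2$, $P=k$, the only one not killed outright by the crude bound ``target $>$ maximum.'' There $p_2=k-p_1$ and the identity collapses to $p_1(k-p_1)=q$, so $k^2-4q$ would have to be a perfect square; writing $k^2-4q=D^2$ leads to $ef=q$ with $e+f=k$, and since $q$ is odd and square-free we have $\gcd(e,f)=1$, hence $\gcd(e+f,ef)=1$. As $m=8ef/(e+f)$ is an integer, $(e+f)=k$ must divide $8$, contradicting $k\ge 11$. It is exactly here that the hypothesis $k\ge 11>8$ is indispensable; equivalently, the discreteness of $\sum_t p_t^2$ leaves a gap of width about $2k$ at the top of the spectrum into which the forced value falls, and keeping this gap effective is what pins down the ranges $m\le 12$, $k\ge 11$ (and, presumably, what permits the smaller exceptional graphs of Table~\ref{tab:MaximalF(k,m)}).

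Finally, for $y_0=1$ the same divisibility analysis applied to \eqref{eq:y0=1} gives $4q\mid\ell-(k-1)$, and with $0\le\ell\le 8q$ and $m\ge 2$ this leaves $\ell-(k-1)\in\{0,4q\}$. The value $\ell=k-1$ forces $\sum_i\g_i^2=\big(\sum_i\g_i\big)^2$, whence a single $\g_i=k-1$ and all others $0$: one of the remaining columns of $A$. The value $\ell=k-1+4q$ reduces, exactly as above, to $\sum_i\g_i(k-\g_i)=k-1+2q$ with at most four partial blocks; the generic profiles are again excluded by the spectral comparison, and the balanced two-block case collapses to $p_1(k-1-p_1)=q$, forcing $(k-1)\mid 8$ and once more contradicting $k\ge 11$. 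Collecting the two cases, no nontrivial $(0,1)$-solution survives, so $F(k,m)$ is maximal.
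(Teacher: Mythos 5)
Your overall strategy is the same as the paper's: reduce maximality, via Lemma~\ref{adding vertex} and the invertibility of $A$, to the non-existence of nontrivial solutions of \eqref{eq:y0=0} and \eqref{eq:y0=1}; use $8q\mid\ell^2$ (resp.\ $8q\mid(\ell-k+1)^2$) together with $q$ odd and square-free to force $\ell=mk/2$ (resp.\ $\ell=mk/2+k-1$); then do a case analysis governed by the number of blocks with $\g_i\notin\{0,k\}$. Your parameter $j$ is exactly the paper's $b$, and your bound $j(k-1)\leqslant mk/4$ (resp.\ $\leqslant k-1+mk/4$) is the paper's bound on $b$. Where you genuinely differ is in how the cases are dispatched: the paper centers the variables at $k/2$ and argues with inequalities on $\sum_i\ep_i^2$, assuming $k\geqslant13$ and quoting Table~\ref{tab:MaximalF(k,m)} for $k=11,12$; you instead use the congruence $P\equiv\ell\pmod k$, extremal comparisons for $\sum_t p_t^2$ at fixed sum $P$, and a divisor argument in the balanced cases. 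Your treatment of the balanced case of \eqref{eq:y0=0} ($p_1+p_2=k$, $p_1p_2=q$, hence $\gcd(k,q)=1$ and $k\mid8$) is correct and cleaner than the paper's inequality chain, it isolates exactly why $k\geqslant11>8$ matters, and your estimates appear to hold already for $k\geqslant9$, so your route would not need the computer data for $k=11,12$.

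There is, however, a concrete error in your $y_0=1$ balanced case. There $p_1+p_2=k-1$ and $p_1p_2=q$, and coprimality of $p_1,p_2$ gives $\gcd(k-1,q)=1$; but the integrality constraint is $m=8q/k$, \emph{not} $8q/(k-1)$, so nothing forces $(k-1)\mid8$: the denominator $k$ equals $e+f+1$, not $e+f$, and $\gcd(k,q)$ is not controlled by your gcd computation. The case can still be eliminated, but by a different argument: since $q$ is odd, both $p_1,p_2$ are odd, so $k-1$ is even and $k$ is odd; then $mk=8q$ forces $8\mid m$, hence $m=8$ and $q=k$; now $p_1p_2=k$ and $p_1+p_2=k-1$ give $(p_1-1)(p_2-1)=2$, so $\{p_1,p_2\}=\{2,3\}$ and $k=6$, contradicting $k\geqslant11$ (and contradicting $q$ odd). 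With that repair, and after actually carrying out the finitely many deferred ``spectral comparison'' cases — note that in at least one of them ($j=3$, $P=2k-1$, even $m\geqslant8$, for \eqref{eq:y0=1}) the forced value of $\sum_t p_t^2$ really does fall strictly between the two largest attained values $2(k-1)^2+1$ and $(k-1)^2+(k-2)^2+4$ rather than above the maximum, so the gap half of your dichotomy is genuinely needed — your proof goes through.
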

\begin{proof}{For $k=11,12$, the result follows from Table~\ref{tab:MaximalF(k,m)}. So we may assume that $k\geqslant13$.
It suffices to show that Equations \eqref{eq:y0=0} and \eqref{eq:y0=1} have no non-trivial solutions.
We keep using the notation of the proof of Theorem \ref{thm:F(k,m)}.

We first consider the solutions of \eqref{eq:y0=1}.
Let $\g_1,\ldots,\g_m$ satisfies \eqref{eq:y0=1}.
Then $mk\mid(\ell-k+1)^2$, and since $mk=8q$ with $q$ odd and square-free, we have that $mk/2\mid\ell-k+1$.
Note that $\ell\leqslant mk$, and $\ell=k-1$ only for the trivial solution of \eqref{eq:y0=1}. It follows that
$\ell=mk/2+k-1$.
Let $\ep_i=\g_i-\frac k2$ for $i=1,\ldots,m$. Then we see that
\begin{align}
 \sum_{i=1}^m\ep_i&=k-1,\label{eq:1epsilon}\\
\sum_{i=1}^m\ep_i^2&=\frac{mk^2}4-\frac{mk}4-k+1.\label{eq:1epsilon^2}
\end{align}
Since $0\leqslant\g_i\leqslant k$, we have $0\leqslant|\ep_i|\leqslant k/2$.
Let
\begin{equation}\label{eq:ep_b}
|\ep_1|,\ldots,|\ep_b|\leqslant\frac k2-1, \quad |\ep_{b+1}|=\cdots=|\ep_m|=\frac k2.
\end{equation}
From \eqref{eq:1epsilon^2}, it follows that $b(k-1)\leqslant mk/4+k-1$ which implies that $b\leqslant\left\lfloor\frac{mk}{4(k-1)}\right\rfloor+1$.
Thus, as  $k\geqslant13$, we have $b\leqslant3$ for $m\leqslant11$,  and $b\leqslant4$ for $m=12$.

First, let $b=1$, that is $\ep_2=\cdots=\ep_m=\pm k/2$. Hence, $\ep_2+\cdots+\ep_m=j k/2$ for some integer $j\equiv m-1\pmod2$. It turns out that \eqref{eq:1epsilon} holds only if $\ep_1=-1$ or $k/2-1$. If $\ep=-1$, then $j$ must be even which means that $m$ must be odd. Now from \eqref{eq:1epsilon^2}, we have $1+(m-1)k^2/4=mk^2/4-mk/4-k+1$ which implies that
$k=m+4$. So $k$ must be odd and so is $mk$, a contradiction. If  $\ep=k/2-1$, then  \eqref{eq:1epsilon^2} cannot hold, again a contradiction.

Next, let $b=2$. So, by \eqref{eq:1epsilon^2},
\begin{equation}\label{eq:=ep1+ep2}
    \ep_1^2+\ep_2^2=k^2/2-mk/4-k+1.
\end{equation}
We claim that $m$ must be even. Otherwise, $k=8q'$ for some odd $q'$, and so the right hand side of \eqref{eq:=ep1+ep2} is an odd integer.
It also turns out that both $\ep_1$ and $\ep_2$ are  integers: one odd and the other one even. So
$-2mq'+1\equiv\ep_1^2+\ep_2^2\equiv1,5\pmod8$. This implies that $m$ is even, as desired. It follows that
 $\ep_3+\cdots+\ep_m=jk$ for some integer $j$. From \eqref{eq:1epsilon}, then it follows that $\ep_1+\ep_2=-1$ or $k-1$.
 As $\ep_1$ and $\ep_2$ are at most $k/2-1$, the second option is not possible. So $\ep_1+\ep_2=-1$.
 Assume that $\ep_1\geqslant\ep_2$.
 If $\ep_1\leqslant k/2-3$, then $\ep_2\geqslant 2-k/2$, and thus
 $\ep_1^2+\ep_2^2\leqslant k^2/2-5k+13$. On the other hand, by \eqref{eq:=ep1+ep2} and since $m\leqslant12$, we have
 $\ep_1^2+\ep_2^2\geqslant k^2/2-4k+1$. So we must have $-4k+1\leqslant-5k+13$ which does not hold for $k\geqslant13$.
 It follows that $\ep_1= k/2-2$ and $\ep_2=1-k/2$, and so $\ep_1^2+\ep_2^2= k^2/2-3k+5$. From \eqref{eq:=ep1+ep2}, we have
 $-2k+5=-mk/4+1$ which implies that $k=q+2$, that is $k$ is odd. So $8\mid m$ and thus $m=8$ which leads to a contradiction.

Now, let $b=3$, so $m\geqslant8$. By \eqref{eq:1epsilon^2},
\begin{equation}\label{eq:=ep1+ep2+ep3}
\ep_1^2+\ep_2^2+\ep_3^2=3k^2/4-mk/4-k+1.
\end{equation}
From \eqref{eq:1epsilon} and \eqref{eq:ep_b}, we see that $\ep_1+\ep_2+\ep_3=jk/2-1$ for some $j\in\{0,\pm1,\pm2\}$.
If $|\ep_1|=|\ep_2|=|\ep_3|=k/2-1$, then $\ep_1^2+\ep_2^2+\ep_3^2=3k^2/4-3k+3.$ So
$-2k+3=-mk/4+1\leqslant-2k+1$, a contradiction.
It turns out that  at least two of the $|\ep_1|,|\ep_2|,|\ep_3|$ are less than $k/2-1$. So
$$\ep_1^2+\ep_2^2+\ep_3^2\leqslant(k/2-1)^2+2(k/2-2)^2=3k^2/4-5k+9.$$
As $m\leqslant12$, the right hand side of \eqref{eq:=ep1+ep2+ep3} is at least $3k^2/2-4k+1$. It follows that
$-5k+9\geqslant-4k+1$ which holds only for $k\leqslant8$.

Finally, let $b=4$. This is only possible for $m=12$. By \eqref{eq:1epsilon^2},
$$\ep_1^2+\ep_2^2+\ep_3^2+\ep_4^2=k^2-4k+1.$$
From \eqref{eq:1epsilon}, we see that $\ep_1+\ep_2+\ep_3+\ep_4=jk/2-1$ for some integer $j$.
It turns out that not all of the $|\ep_1|,|\ep_2|,|\ep_3|,|\ep_4|$ can be $k/2-1$. So
$$\ep_1^2+\ep_2^2+\ep_3^2+\ep_4^2\leqslant3(k/2-1)^2+(k/2-2)^2=k^2-5k+7.$$
It follows that $-5k+7\geqslant-4k+1$, which holds only for $k\leqslant6$.

Now, we deal with the solutions of  \eqref{eq:y0=0}.
Let $\g_1,\ldots,\g_m$ satisfies \eqref{eq:y0=0}.
Then $mk\mid\ell^2$, and since $mk=8q$ with $q$ odd and square-free, we have that $mk/2\mid\ell$.
Note that $\ell=0$ and $\ell=mk$ only hold for the trivial solution $\g_1=\cdots=\g_m=0$ and $\g_1=\cdots=\g_m=k$, respectively. Therefore,
$\ell=mk/2$. Let $\ep_i=\g_i-\frac k2$ for $i=1,\ldots,m$. Then
\begin{align}
 \sum_{i=1}^m\ep_i&=0,\label{eq:0epsilon}\\
\sum_{i=1}^m\ep_i^2&=\frac{mk^2}4-\frac{mk}4.\label{eq:0epsilon^2}
\end{align}
Let $\ep_1,\ldots,\ep_b$ be as in \eqref{eq:ep_b}. From \eqref{eq:0epsilon^2}, it follows that $b(k-1)\leqslant mk/4$ which implies that $b\leqslant\left\lfloor\frac{mk}{4(k-1)}\right\rfloor$. Thus, we have $b\leqslant2$ for $m\leqslant11$  and $b\leqslant3$ for $m=12$.

With $b=1$, \eqref{eq:0epsilon} can be satisfied
only if $\ep_1=\ep_2+\cdots+\ep_m=0$. In this case, \eqref{eq:0epsilon^2} can be satisfied only if $m=k$ which is not possible since $m\leqslant12<k$.  Next, let $b=2$. So $m\geqslant8$.
We have $\ep_1+\ep_2=0$ or $\pm k/2$ and $\ep_1^2+\ep_2^2=k^2/2-mk/4$.
First, assume that  $\ep_1+\ep_2=0$.
If $|\ep_1|=k/2-1$, then we have $k^2/2-2k+2=\ep_1^2+\ep_2^2=k^2/2-mk/4\leqslant k^2/2-2k$, which is a contradiction.
Hence $|\ep_1|\leqslant k/2-2$, and so $k^2/2-4k+8\geqslant\ep_1^2+\ep_2^2=k^2/2-mk/4\geqslant k^2/2-3k$, which holds only for
$k\leqslant8$.
Second, with no loss of generality, we can assume that $\ep_1+\ep_2=k/2$.
In view of \eqref{eq:ep_b}, both $\ep_1$ and $\ep_2$ must be positive.
So $\ep_1^2+\ep_2^2<k^2/4$.
This implies  $k^2/2-mk/4< k^2/4$, that is
$m>k$ which is impossible. Finally, let $b=3$. So $m=12$ and $\ep_1^2+\ep_2^2+\ep_3^2=3k^2/4-3k$.
By  \eqref{eq:ep_b},  $\ep_1+\ep_2+\ep_3=jk/2$ for some $j\in\{0,\pm1,\pm2\}$.
It turns out that not all of the $|\ep_1|,|\ep_2|,|\ep_3|$ can be $k/2-1$. So
$$3k^2/4-3k=\ep_1^2+\ep_2^2+\ep_3^2\leqslant2(k/2-1)^2+(k/2-2)^2=3k^2/4-4k+6,$$
 which can be satisfied only for $k\leqslant6$. The proof is now complete.
}\end{proof}

We close this section by a summary of the results on maximality of $F(k,m)$: for integers $k\geqslant2$ and $m\geqslant1$,
\begin{itemize}
\item[(i)] $F(k,m)$ is maximal if $mk$ or $mk/2$ is square-free;
\item[(ii)] the converse of (i) holds for $k=2,3$ or  $m\geqslant\frac{(5k^2-19k+20)^2}{4k}$;
\item[(iii)] $F(k,m)$ is maximal if $mk=8q$ with $k\geqslant11$,  $m\leqslant12$, and $q$ a square-free odd integer.
\end{itemize}
These  provide a near-complete characterization of maximal $F(k,m)$.
We leave the complete characterization as an open problem.

\section{Maximal graphs with small rank}\label{sec:rank8}

In this section we present some statistics  of maximal graphs with small rank.
We start by Table~\ref{tab:r2-5} in which all the maximal graphs with rank at most $5$ are depicted.

\begin{table}[h!]
\centering
\begin{tabular}{cccc}
\hline
Rank &&& Maximal graphs \\ \hline  \vspace{-.4cm}\\
2&&&
\begin{tikzpicture}[scale=.5]
	\draw [line width=.5pt] (1,0)-- (3,0);
	\draw [fill=black] (1,0) circle (5pt);
	\draw [fill=black] (3,0) circle (5pt);
\end{tikzpicture} \vspace{.15cm} \\ \hline \vspace{-.35cm} \\
3&&&
$\begin{array}{c}\begin{tikzpicture}[scale=.5]
\draw [line width=.5pt] (-3,0)-- (-1,0);
	\draw [line width=.5pt] (-1,0)-- (-2,1.72);
	\draw [line width=.5pt] (-2,1.72)-- (-3,0);
	\draw [fill=black] (-3,0) circle (5pt);
	\draw [fill=black] (-1,0) circle (5pt);
	\draw [fill=black] (-2,1.72) circle (5pt);
\end{tikzpicture}\end{array}$ \vspace{.15cm} \\ \hline \vspace{-.35cm} \\
4 &&&
$\begin{array}{cccccc}
\begin{tikzpicture}[scale=.4]
	\draw [line width=.5pt] (4,-2)-- (6,-2);
	\draw [line width=.5pt] (6,-2)-- (7,-0.3);
	\draw [line width=.5pt] (7,-0.3)-- (6,1.5);
	\draw [line width=.5pt] (6,1.5)-- (4,1.5);
	\draw [line width=.5pt] (4,1.5)-- (3,-0.3);
	\draw [line width=.5pt] (3,-0.3)-- (4,-2);
	\draw [line width=.5pt] (6,1.5)-- (4,-2);
	\draw [line width=.5pt] (4,1.5)-- (7,-0.3);
	\draw [line width=.5pt] (3,-0.3)-- (6,-2);
	\draw [fill=black] (4,-2) circle (5pt);
	\draw [fill=black] (6,-2) circle (5pt);
	\draw [fill=black] (7,-0.3) circle (5pt);
	\draw [fill=black] (6,1.5) circle (5pt);
	\draw [fill=black] (4,1.5) circle (5pt);
	\draw [fill=black] (3,-0.3) circle (5pt);
	\end{tikzpicture}&&
	\begin{tikzpicture}[scale=.45]
	\draw [line width=.5pt] (-1,-2)-- (1,-2);
	\draw [line width=.5pt] (1,-2)-- (1.6,-0.1);
	\draw [line width=.5pt] (1.6,-0.1)-- (0,1.08);
	\draw [line width=.5pt] (0,1.08)-- (-1.6,-0.1);
	\draw [line width=.5pt] (-1.6,-0.1)-- (-1,-2);
	\draw [line width=.5pt] (-2.5,-2)-- (-1,-2);
	\draw [line width=.5pt] (-1.6,-0.1)-- (1,-2);
	\draw [fill=black] (-1,-2) circle (5pt);
	\draw [fill=black] (1,-2) circle (5pt);
	\draw [fill=black] (1.6,-0.1) circle (5pt);
	\draw [fill=black] (0,1.08) circle (5pt);
	\draw [fill=black] (-1.6,-0.1) circle (5pt);
	\draw [fill=black] (-2.5,-2) circle (5pt);
	\end{tikzpicture}&&
	\begin{tikzpicture}[scale=.5]
	\draw [line width=.5pt] (-6,-2)-- (-4,-2);
	\draw [line width=.5pt] (-4,-2)-- (-4,0);
	\draw [line width=.5pt] (-4,0)-- (-6,0);
	\draw [line width=.5pt] (-6,0)-- (-6,-2);
	\draw [line width=.5pt] (-4,0)-- (-6,-2);
	\draw [line width=.5pt] (-6,0)-- (-4,-2);
	\draw [fill=black] (-6,-2) circle (5pt);
	\draw [fill=black] (-4,-2) circle (5pt);
	\draw [fill=black] (-4,0) circle (5pt);
	\draw [fill=black] (-6,0) circle (5pt);
	\end{tikzpicture}
	\end{array}$ \vspace{.15cm} \\	\hline \vspace{-.35cm} \\
5 &&&		
$\begin{array}{cccccccc}
\begin{tikzpicture}[scale=.35]	
	\draw [line width=.5pt] (2,6)-- (4,6);
	\draw [line width=.5pt] (4,6)-- (5.40,7.40);
	\draw [line width=.5pt] (5.4,7.4)-- (5.4,9.4);
	\draw [line width=.5pt] (5.40,9.40)-- (4,10.83);
	\draw [line width=.5pt] (4,10.83)-- (2,10.83);
	\draw [line width=.5pt] (2,10.83)-- (0.59,9.40);
	\draw [line width=.5pt] (0.59,9.40)-- (0.59,7.40);
	\draw [line width=.5pt] (0.59,7.40)-- (2,6);		
	\draw [line width=.5pt] (2,10.83)-- (5.40,7.40);
	\draw [line width=.5pt] (2,10.83)-- (0.59,7.40);
	\draw [line width=.5pt] (4,10.83)-- (0.59,7.40);
	\draw [line width=.5pt] (4,10.83)-- (0.59,9.40);
	\draw [line width=.5pt] (5.40,9.40)-- (2,6);
	\draw [line width=.5pt] (0.59,9.40)-- (4,6);
    \draw [line width=.5pt] (5.4,9.40)-- (4,6);
	\draw [line width=.5pt] (2,6)-- (5.40,7.40);	
	\draw [fill=black] (2,6) circle (5pt);
	\draw [fill=black] (4,6) circle (5pt);
	\draw [fill=black] (5.40,7.40) circle (5pt);
	\draw [fill=black] (5.40,9.40) circle (5pt);
	\draw [fill=black] (4,10.83) circle (5pt);
	\draw [fill=black] (2,10.83) circle (5pt);
	\draw [fill=black] (0.59,9.40) circle (5pt);
	\draw [fill=black] (0.59,7.40) circle (5pt);
\end{tikzpicture}
 &&
 \begin{tikzpicture}[scale=.4]
 	\draw [line width=.5pt] (-3,-3)-- (-1,-3);
 	\draw [line width=.5pt] (0,1.39)-- (1.80,0.50);
 	\draw [line width=.5pt] (0,1.39)-- (-1.80,0.50);
 	\draw [line width=.5pt] (0,1.39)-- (1,-3);
 	\draw [line width=.5pt] (0,1.39)-- (-1,-3);
 	\draw [line width=.5pt] (0,1.39)-- (-2.20,-1.40);
 	\draw [line width=.5pt] (1.80,0.50)-- (-1.80,0.50);
 	\draw [line width=.5pt] (1.80,0.50)-- (2.25,-1.40);
 	\draw [line width=.5pt] (1.80,0.50)-- (1,-3);
 	\draw [line width=.5pt] (2.25,-1.40)-- (1,-3);
 	\draw [line width=.5pt] (2.25,-1.40)-- (-2.20,-1.40);
 	\draw [line width=.5pt] (2.25,-1.40)-- (-1.80,0.50);
 	\draw [line width=.5pt] (1,-3)-- (-2.20,-1.40);
 	\draw [line width=.5pt] (1,-3)-- (-1,-3);
 	\draw [line width=.5pt] (-1,-3)-- (-2.20,-1.40);
 	\draw [line width=.5pt] (-1.80,0.50)-- (-2.20,-1.40);
	\draw [fill=black] (-1,-3) circle (5pt);
	\draw [fill=black] (1,-3) circle (5pt);
	\draw [fill=black] (-3,-3) circle (5pt);
	\draw [fill=black] (2.25,-1.40) circle (5pt);
	\draw [fill=black] (1.80,0.50) circle (5pt);
	\draw [fill=black] (0,1.39) circle (5pt);
	\draw [fill=black] (-1.80,0.50) circle (5pt);
	\draw [fill=black] (-2.20,-1.40) circle (5pt);
\end{tikzpicture}
&&
	\begin{tikzpicture}[scale=.4]	
	\draw [line width=.5pt] (-1,-3)-- (1,-3);
	\draw [line width=.5pt] (1,-3)-- (2.25,-1.40);
	\draw [line width=.5pt] (2.25,-1.40)-- (1.80,0.50);
	\draw [line width=.5pt] (1.80,0.50)-- (0,1.39);
	\draw [line width=.5pt] (0,1.39)-- (-1.80,0.50);
	\draw [line width=.5pt] (-1.80,0.50)-- (-2.20,-1.40);
	\draw [line width=.5pt] (-2.20,-1.40)-- (-1,-3);
	\draw [line width=.5pt] (1.80,0.50)-- (-1.80,0.50);
	\draw [line width=.5pt] (-2.20,-1.40)-- (2.25,-1.40);
	\draw [line width=.5pt] (1.80,0.50)-- (-2.20,-1.40);
	\draw [line width=.5pt] (-1.80,0.50)-- (2.25,-1.40);
	\draw [fill=black] (-1,-3) circle (5pt);
	\draw [fill=black] (1,-3) circle (5pt);
	\draw [fill=black] (2.25,-1.40) circle (5pt);
	\draw [fill=black] (1.80,0.50) circle (5pt);
	\draw [fill=black] (0,1.39) circle (5pt);
	\draw [fill=black] (-1.80,0.50) circle (5pt);
	\draw [fill=black] (-2.20,-1.40) circle (5pt);
\end{tikzpicture} &&
\begin{tikzpicture}[scale=.4]	
	\draw [line width=.5pt] (13,-3)-- (15,-3);
	\draw [line width=.5pt] (15,-3)--(16.20,-1.4);
	\draw [line width=.5pt] (16.20,-1.40)-- (15.80,0.5);
	\draw [line width=.5pt] (15.8,0.5)-- (14,1.39);
	\draw [line width=.5pt] (14,1.39)-- (12.20,0.50);
	\draw [line width=.5pt] (12.20,0.50)-- (11.75,-1.4);
	\draw [line width=.5pt] (11.75,-1.4)-- (13,-3);
	\draw [line width=.5pt] (12.20,0.50)-- (13,-3);
	\draw [line width=.5pt] (12.20,0.50)-- (15,-3);
	\draw [line width=.5pt] (12.20,0.50)-- (16.20,-1.40);
	\draw [line width=.5pt] (12.20,0.50)-- (15.80,0.50);
	\draw [line width=.5pt] (11.75,-1.40)-- (15.80,0.50);
	\draw [line width=.5pt] (11.75,-1.40)-- (14,1.39);
	\draw [line width=.5pt] (13,-3)-- (16.20,-1.40);
	\draw [line width=.5pt] (15,-3)-- (14,1.39);
	\draw [fill=black] (13,-3) circle (5pt);
	\draw [fill=black] (15,-3) circle (5pt);
	\draw [fill=black] (16.20,-1.40) circle (5pt);
	\draw [fill=black] (15.80,0.50) circle (5pt);
	\draw [fill=black] (14,1.39) circle (5pt);
	\draw [fill=black] (12.20,0.50) circle (5pt);
	\draw [fill=black] (11.75,-1.40) circle (5pt);
\end{tikzpicture} \\

\begin{tikzpicture}[scale=.4]
	\draw [fill=black] (2,0) circle (5pt);
	\draw [fill=black] (1,1.732) circle (5pt);
	\draw [fill=black] (0,0) circle (5pt);
	\draw [fill=black] (-1,1.732) circle (5pt);
	\draw [fill=black] (-2,0) circle (5pt);
	\draw [fill=black] (-1,-1.732) circle (5pt);
	\draw [fill=black] (1,-1.732) circle (5pt);
	\draw [line width=.5pt] (1,1.732)-- (-1,1.732);
	\draw [line width=.5pt] (1,1.732)-- (2,0);
	\draw [line width=.5pt] (2,0)-- (1,-1.732);
	\draw [line width=.5pt] (1,-1.732)-- (-1,-1.732);
	\draw [line width=.5pt] (-1,-1.732)-- (-2,0);
	\draw [line width=.5pt] (-2,0)-- (-1,1.732);
	\draw [line width=.5pt] (0,0)-- (-1,1.732);
	\draw [line width=.5pt] (0,0)-- (1,1.732);
	\draw [line width=.5pt] (0,0)-- (-1,-1.732);
	\draw [line width=.5pt] (0,0)-- (1,-1.732);
	\draw [line width=.5pt] (1,1.732)-- (1,-1.732);
	\draw [line width=.5pt] (-1,1.732)-- (-1,-1.732);
\end{tikzpicture}
&&
\begin{tikzpicture}[scale=.4]	
	\draw [line width=.5pt] (-1,-10)-- (1,-10);
	\draw [line width=.5pt] (1,-10)-- (0,-8.3);
	\draw [line width=.5pt] (0,-8.27)-- (-1,-10);
	\draw [fill=black] (-1,-10) circle (5pt);
	\draw [fill=black] (1,-10) circle (5pt);
	\draw [fill=black] (0,-8.3) circle (5pt);
			\draw [line width=.5pt] (0,-8.3)-- (-1,-6.6);
	\draw [line width=.5pt] (0,-8.3)-- (1,-6.6);
	\draw [line width=.5pt] (-1,-6.6)-- (1,-6.6);
		\draw [fill=black] (-1,-6.6) circle (5pt);
	\draw [fill=black] (1,-6.6) circle (5pt);
\end{tikzpicture} &&
\begin{tikzpicture}[scale=.4]	
		\draw [line width=.5pt] (6,-10)-- (8,-10);
	\draw [line width=.5pt] (8,-10)-- (8.6,-8.1);
	\draw [line width=.5pt] (8.6,-8.1)-- (7,-6.90);
	\draw [line width=.5pt] (7,-6.90)-- (5.38,-8.10);
	\draw [line width=.5pt] (5.38,-8.10)-- (6,-10);
	\draw [line width=.5pt] (7,-6.90)-- (6,-10);
	\draw [line width=.5pt] (7,-6.90)-- (8,-10);
	\draw [line width=.5pt] (8.6,-8.1)-- (5.38,-8.10);
	\draw [line width=.5pt] (8.6,-8.1)-- (6,-10);
	\draw [line width=.5pt] (5.38,-8.10)-- (8,-10);
	\draw [fill=black] (6,-10) circle (5pt);
	\draw [fill=black] (8,-10) circle (5pt);
	\draw [fill=black] (8.6,-8.1) circle (5pt);
	\draw [fill=black] (7,-6.90) circle (5pt);
	\draw [fill=black] (5.38,-8.10) circle (5pt);
	\end{tikzpicture} &&
\begin{tikzpicture}[scale=.4]	
	\draw [line width=.5pt] (13,-10)-- (15,-10);
	\draw [line width=.5pt] (15,-10)-- (15.79,-7.8);
	\draw [line width=.5pt] (15.79,-7.8)-- (13.97,-6.5);
	\draw [line width=.5pt] (13.97,-6.5)-- (12.2,-7.9);
	\draw [line width=.5pt] (12.2,-7.9)-- (13,-10);
	\draw [fill=black] (13,-10) circle (5pt);
	\draw [fill=black] (15,-10) circle (5pt);
	\draw [fill=black] (15.79,-7.8) circle (5pt);
	\draw [fill=black] (13.97,-6.5) circle (5pt);
	\draw [fill=black] (12.2,-7.9) circle (5pt);
\end{tikzpicture}
\end{array}$ \\ \hline
\end{tabular}
\caption{Maximal graphs with rank up to  $5$.}
\label{tab:r2-5}
\end{table}

The maximal graphs up to rank $7$ were enumerated in \cite{ell} and independently in the series of the papers~\cite{tor1,tor2,lep,laz}.
More information on maximal graphs up to rank $7$  was given in ~\cite{laz} from which we quote
Tables~\ref{tab:r6}  and \ref{tab:t7} containing the distribution of maximal graphs with ranks $6$ and $7$ based on their orders.

\begin{table}[h!]
	\centering
		\begin{tabular}{rccccccccc}
		\hline
		Order & 6 & 7 & 8 & 9 & 10 & 11 & 12 & 13 & 14 \\ \hline
		$\#$ Maximal graphs & 5 & 0 & 2 & 5 & 2 & 2 & 6 & 2 & 3 \\
		\hline
	\end{tabular}
	\caption{The distribution of maximal graphs with rank 6.}
	\label{tab:r6}
\end{table}

\begin{table}[h!]
	\centering
	\begin{tabular}{rcccccccccccc}
		\hline
		Order & 7 & 8 & 9 & 10 & 11 & 12 & 13 & 14 & 15 & 16 & 17 & 18 \\ \hline
		$\#$ Maximal graphs & 13 & 4 & 18 & 2 & 32 & 13 & 63 & 11 & 19 & 5 & 0 & 3 \\
		\hline
	\end{tabular}
	\caption{The distribution of maximal graphs with rank 7.}
	\label{tab:t7}
\end{table}

We continue this line of work for the ranks 8 and 9. This is done by implementing an algorithm for constructing all maximal graphs with a given rank from \cite{ell, ack}.
For a given integer $r$, the input of the algorithm is the set of reduced graphs with both order and rank equal to $r$
and the output of the algorithm is the set of all maximal graphs of rank $r$.
The input of the algorithm was generated by using Mckay database of small graphs \cite{mk}. Consequently, we construct all maximal graphs with rank 8 and 9.
We found that there are exactly 2807 maximal graphs with rank 8. Their orders run over from 8 to 30. Also,
there are exactly 122511 maximal graphs with rank 9. Their orders run over from 9 to 38 with exceptions of 33, 35, 36.
In Table~\ref{tab:up to 9}, for the sake of completion, a summary of the number of maximal graphs of  rank up to 9 is given.
Moreover, the distributions of maximal graphs with rank 8 and 9 based on their orders are given in Tables~\ref{tab:rank8} and \ref{tab:r9}.
In Table~\ref{tab:r8edge}, we report more detailed information based on the orders and sizes (the number of edges) of maximal graphs with rank 8.
The Magma program and the data sets of maximal graphs with ranks $6,7,8,9$ is available online at \url{https://wp.kntu.ac.ir/ghorbani/comput}.

\begin{table}[h!]
	\centering
	\begin{tabular}{rcccccccc}
		\hline
		Rank & 2 & 3 & 4 & 5 & 6 & 7 & 8  &9\\ \hline
		$\#$ Maximal graphs & 1  & 1 & 3 & 8 & 27 & 183 & 2807& 122511 \\
		\hline
	\end{tabular}
	\caption{The number of maximal graphs with rank up to  9.}
	\label{tab:up to 9}
\end{table}

 \begin{table}[h!]
 	\centering
 	\begin{tabular}{rcccccccccccc}
 		\hline
 		Order & 8 & 9 & 10 & 11 & 12 & 13 & 14 & 15 & 16 & 17 & 18 & 19  \\ \hline
 		$\#$ Maximal graphs & 38 & 52 & 80 & 78 & 117 & 98 & 90 & 254 & 137 & 81 & 115 & 243 \\
 		\hline\vspace{-.25cm}\\  \hline
 		Order & 20 & 21 & 22 & 23 & 24 & 25 & 26 & 27 & 28 & 29 & 30 &    \\ \hline
 		$\#$ Maximal graphs & 884 & 252 & 134 & 69 & 57 & 7 & 7 & 5 & 3 & 2 & 4 &   \\
 		\hline
 	\end{tabular}
 	\caption{The distribution of maximal graphs with rank 8.}\label{tab:rank8}
 \end{table}

\begin{table}[h!]
	\centering
	\begin{tabular}{rcccccccccc}
		\hline
		Order  & 9 & 10 & 11 & 12 & 13 & 14 & 15 & 16 & 17 & 18  \\ \hline
		$\#$ Maximal graphs & 192 & 472 & 1014 & 786 &1402  & 1562 & 2198 & 1963 & 3509  & 2824  \\
		\hline \vspace{-.25cm}\\  \hline
		Order  & 19 & 20& 21 & 22 & 23 & 24 & 25 & 26 & 27 & 28  \\ \hline
		$\#$ Maximal graphs & 3660 & 17229 &  51315&20069  &8663  &2941 & 1622 & 528 &  266&136\\
		\hline\vspace{-.25cm}\\  \hline
		Order  & 29& 30&31  & 32& 33 & 34 &35 & 36 & 37& 38        \\ \hline
		$\#$ Maximal graphs &39& 42 & 42& 24 & 0 & 7& 0& 0 & 2 & 4
		\\
		\hline
	\end{tabular}
	\caption{The distribution of maximal graphs with rank 9.}\label{tab:r9}
\end{table}

\section*{Acknowledgements}
The research of the second author was partly funded by the Iranian National Science Foundation (INSF).
The authors would like to thank Max Alekseyev as the proof of Theorem~\ref{thm:nonMaximalF(k,m)} is essentially his argument posted on MathOverflow (\url{mathoverflow.net/questions/346085}) and also thank anonymous referees for constructive comments.

{}

\begin{table}
\centering
$${\tiny\begin{array}{|c|c|c| }
\hline	n & m & \# \\ \hline
	 & 12 & 1 \\
	 & 13 & 4 \\
	 & 14 & 3 \\
	 & 15 & 5 \\
	 & 16 & 6 \\
	 & 17 & 4 \\
	8 & 18 & 3 \\
	 & 19 & 5 \\
	 & 20 & 2 \\
	 & 21 & 1 \\
	 & 22 & 1 \\
	 & 23 & 1 \\
	 & 24 & 1 \\
	 & 28 & 1 \\ \hline
	 & 14 & 1 \\
	 & 15 & 5 \\
	 & 16 & 4 \\
	 & 17 & 4 \\
	 & 18 & 9 \\
	9 & 19 & 6 \\
	 & 20 & 7 \\
	 & 21 & 7 \\
	 & 22 & 4 \\
	 & 23 & 2 \\
	 & 24 & 2 \\
	 & 25 & 1 \\ \hline
	 & 19 & 3 \\
	 & 20 & 2 \\
	 & 21 & 5 \\
	 & 22 & 2 \\
	 & 23 & 6 \\
	 & 24 & 10 \\
	 & 25 & 8 \\
	 & 26 & 7 \\
	10 & 27 & 10 \\
	 & 28 & 4 \\
	 & 29 & 8 \\
	 & 30 & 7 \\
	 & 31 & 2 \\
	 & 32 & 1 \\
	 & 33 & 1 \\
	 & 34 & 1 \\
	 & 35 & 2 \\
	 & 39 & 1 \\ \hline
	 & 23 & 3 \\
	 & 24 & 5 \\
	 & 25 & 7 \\
	 & 26 & 13 \\
	 & 27 & 11 \\
	 & 28 & 8 \\
	 & 29 & 5 \\
	 & 30 & 2 \\
	 & 31 & 2 \\
	11 & 32 & 3 \\
	 & 33 & 4 \\
	 & 34 & 3 \\
	 & 35 & 3 \\
	 & 36 & 4 \\
	 & 37 & 1 \\
	 & 39 & 1 \\
	 & 41 & 1 \\
	 & 42 & 1 \\
	 & 43 & 1 \\ \hline
\end{array}\hspace{.17cm}
\begin{array}{|c|c|c| }
\hline	n & m & \# \\ \hline
	 & 27 & 3 \\
	 & 28 & 4 \\
	 & 29 & 11 \\
	 & 30 & 9 \\
	 & 31 & 9 \\
	 & 32 & 17 \\
	 & 33 & 7 \\
	& 34 & 9 \\
	 & 35 & 5 \\
	 & 36 & 7 \\
12 & 37 & 2 \\
	 & 38 & 9 \\
	 & 39 & 1 \\
	 & 40 & 3 \\
	 & 41 & 2 \\
	 & 42 & 8 \\
	 & 43 & 3 \\
	 & 44 & 3 \\
	 & 45 & 3 \\
	 & 46 & 1 \\
	 & 54 & 1 \\ \hline
	 & 34 & 2 \\
	 & 35 & 1 \\
	 & 36 & 4 \\
	 & 37 & 12 \\
	 & 38 & 9 \\
	 & 39 & 8 \\
	 & 40 & 10 \\
	 & 41 & 7 \\
	 & 42 & 4 \\
	 & 43 & 2 \\
	13 & 44 & 6 \\
	 & 45 & 9 \\
     & 46 & 5 \\
	 & 47 & 1 \\
	 & 48 & 7 \\
	 & 49 & 1 \\
	 & 51 & 2 \\
	 & 53 & 3 \\
	 & 54 & 2 \\
	 & 56 & 1 \\
	 & 60 & 2 \\\hline	
	 & 39 & 2 \\
	 & 41 & 1 \\
	 & 43 & 9 \\
	 & 44 & 4 \\
	 & 45 & 6 \\
	 & 46 & 2 \\
	 & 47 & 18 \\
	 & 48 & 7 \\
	 & 49 & 12 \\
	 & 50 & 5 \\
	14 & 51 & 9 \\
	 & 53 & 4 \\
	 & 54 & 1 \\
	 & 57 & 1 \\
	 & 58 & 2 \\
	 & 59 & 1 \\
	 & 61 & 1 \\
	 & 63 & 1 \\
	 & 64 & 1 \\
	 & 65 & 2 \\
	 & 67 & 1 \\ \hline
\end{array}\hspace{.17cm}
	\begin{array}{|c|c|c| }
\hline	n & m & \# \\ \hline
	 & 46 & 1 \\
	 & 47 & 2 \\
	 & 48 & 7 \\
	 & 49 & 4 \\
	 & 50 & 7 \\
	 & 51 & 11 \\
	 & 52 & 37 \\
	 & 53 & 29 \\
	 & 54 & 25 \\
	 & 55 & 17 \\
	 & 56 & 22 \\
	15 & 57 & 17 \\
	 & 58 & 10 \\
	 & 59 & 12 \\
	 & 60 & 13 \\
	 & 61 & 10 \\
	 & 62 & 4 \\
	 & 63 & 10 \\
	 & 64 & 7 \\
	 & 65 & 3 \\
	 & 67 & 2 \\
	 & 69 & 2 \\
	 & 71 & 1 \\
	 & 72 & 1 \\ \hline
	 & 52 & 1 \\
	 & 54 & 2 \\
	 & 56 & 3 \\
	 & 57 & 6 \\
	 & 58 & 4 \\
	 & 59 & 5 \\
	 & 60 & 10 \\
	 & 61 & 11 \\
	 & 62 & 19 \\
	 & 63 & 19 \\
	 & 64 & 18 \\
	 & 65 & 9 \\
	16 & 66 & 6 \\
	 & 67 & 8 \\
	 & 68 & 3 \\
	 & 69 & 1 \\
	 & 70 & 4 \\
	 & 72 & 1 \\
	 & 73 & 2 \\
	 & 74 & 1 \\
	 & 75 & 1 \\
	 & 78 & 2 \\
	 & 80 & 1 \\  \hline
	 & 55 & 1 \\
	 & 57 & 1 \\
	 & 58 & 1 \\
	 & 59 & 1 \\
	 & 60 & 2 \\
	 & 61 & 3 \\
	 & 62 & 3 \\
	17 & 63 & 1 \\
	 & 64 & 3 \\
	 & 65 & 8 \\
	 & 66 & 3 \\
	 & 67 & 6 \\
	 & 68 & 4 \\
	 & 69 & 3 \\
	 & 70 & 1 \\
 & 71 & 4 \\ \hline
\end{array}	\hspace{.17cm}
\begin{array}{|c|c|c| }
\hline	n & m & \# \\ \hline
	 & 72 & 5 \\
	 & 73 & 2 \\
	 & 74 & 3 \\
	 & 75 & 2 \\
	 & 76 & 5 \\
	 & 77 & 4 \\
	17 & 78 & 4 \\
	 & 79 & 2 \\
	 & 80 & 2 \\
	 & 81 & 1 \\
	 & 82 & 3 \\
	 & 84 & 1 \\
	 & 85 & 1 \\
	 & 88 & 1 \\ \hline
	 & 58 & 1 \\
	 & 60 & 1 \\
	 & 61 & 1 \\
	 & 62 & 6 \\
	 & 63 & 2 \\
	 & 64 & 9 \\
	 & 65 & 5 \\
	 & 66 & 17 \\
	 & 67 & 3 \\
	 & 68 & 12 \\
	 & 69 & 2 \\
	18 & 70 & 21 \\
	 & 71 & 4 \\
	 & 72 & 9 \\
	 & 73 & 4 \\
	 & 74 & 4 \\
	 & 76 & 4 \\
	 & 77 & 1 \\
	 & 80 & 1 \\
	 & 81 & 2 \\
	 & 84 & 1 \\
	 & 86 & 1 \\
	 & 87 & 3 \\
	 & 97 & 1 \\ \hline
	 & 59 & 1 \\
	 & 62 & 1 \\
	 & 63 & 1 \\
	 & 64 & 4 \\
	 & 65 & 1 \\
	 & 66 & 4 \\
	 & 67 & 3 \\
	 & 68 & 9 \\
	 & 69 & 8 \\
	 & 70 & 9 \\
	 & 71 & 14 \\
	19 & 72 & 23 \\
	 & 73 & 10 \\
	 & 74 & 17 \\
	 & 75 & 19 \\
	 & 76 & 25 \\
	 & 77 & 30 \\
	 & 78 & 10 \\
	 & 79 & 16 \\
	 & 80 & 9 \\
	 & 81 & 2 \\
	 & 82 & 4 \\
	 & 83 & 8 \\
	 & 84 & 7 \\
	 & 85 & 2 \\ \hline
\end{array}\hspace{.17cm}
\begin{array}{|c|c|c| }
\hline	n & m & \# \\ \hline
	 & 86 & 1 \\
	 & 87 & 1 \\
	19 & 91 & 1 \\
	 & 96 & 1 \\
	 & 99 & 1 \\
	 & 102 & 1 \\ \hline
	 & 70 & 3 \\
	 & 71 & 6 \\
	 & 72 & 8 \\
	 & 73 & 21 \\
	 & 74 & 18 \\
	 & 75 & 24 \\
	 & 76 & 44 \\
	 & 77 & 49 \\
	 & 78 & 48 \\
	 & 79 & 71 \\
	 & 80 & 80 \\
	 & 81 & 57 \\
	 & 82 & 82 \\
	 & 83 & 56 \\
	 & 84 & 62 \\
	20 & 85 & 54 \\
	 & 86 & 44 \\
	 & 87 & 37 \\
	 & 88 & 26 \\
	 & 89 & 34 \\
	 & 90 & 23 \\
	 & 91 & 11 \\
	 & 92 & 7 \\
	 & 93 & 3 \\
	 & 94 & 6 \\
	 & 95 & 2 \\
	 & 96 & 2 \\
	 & 97 & 1 \\
	 & 98 & 1 \\
	 & 100 & 2 \\
	 & 106 & 2 \\ \hline
	 & 76 & 2 \\
	 & 77 & 1 \\
	 & 78 & 8 \\
	 & 79 & 4 \\
	 & 80 & 6 \\
	 & 81 & 10 \\
	 & 82 & 8 \\
	 & 83 & 7 \\
	 & 84 & 11 \\
	 & 85 & 7 \\
	 & 86 & 19 \\
	 & 87 & 17 \\
	21 & 88 & 13 \\
	 & 89 & 19 \\
	 & 90 & 10 \\
	 & 91 & 8 \\
	 & 92 & 8 \\
	 & 93 & 14 \\
	 & 94 & 17 \\
	 & 95 & 9 \\
	 & 96 & 11 \\
	 & 97 & 12 \\
	 & 98 & 8 \\
	 & 99 & 6 \\
	 & 100 & 4 \\
	 & 101 & 2 \\ \hline
\end{array}\hspace{.17cm}
\begin{array}{|c|c|c| }
\hline	n & m & \# \\ \hline
	 & 102 & 4 \\
	 & 103 & 4 \\
	21 & 104 & 1 \\
	 & 106 & 1 \\
	 & 108 & 1 \\ \hline
	 & 83 & 1 \\
	 & 84 & 1 \\
	 & 85 & 3 \\
	 & 86 & 4 \\
	 & 87 & 2 \\
	 & 88 & 8 \\
	 & 89 & 2 \\
	 & 90 & 1 \\
	 & 91 & 7 \\
	 & 92 & 2 \\
	 & 93 & 3 \\
	 & 94 & 3 \\
	 & 95 & 10 \\
	 & 96 & 11 \\
	22 & 97 & 11 \\
	 & 98 & 10 \\
	 & 99 & 1 \\
	 & 100 & 3 \\
	 & 101 & 3 \\
	 & 102 & 2 \\
	 & 103 & 6 \\
	 & 104 & 9 \\
	 & 105 & 3 \\
	 & 106 & 5 \\
	 & 107 & 5 \\
	 & 108 & 2 \\
	 & 109 & 3 \\
	 & 110 & 2 \\
	 & 111 & 5 \\
	 & 112 & 1 \\
	 & 113 & 2 \\
	 & 114 & 1 \\
	 & 121 & 1 \\
	 & 127 & 1 \\ \hline
	 & 91 & 1 \\
	 & 93 & 1 \\
	 & 94 & 3 \\
	 & 95 & 2 \\
	 & 96 & 4 \\
	 & 97 & 3 \\
	 & 98 & 4 \\
	 & 99 & 1 \\
	 & 100 & 2 \\
	 & 101 & 2 \\
	23 & 102 & 2 \\
	 & 103 & 1 \\
	 & 104 & 2 \\
	 & 105 & 3 \\
	 & 106 & 2 \\
	 & 107 & 3 \\
	 & 108 & 1 \\
	 & 109 & 3 \\
	 & 110 & 1 \\
	 & 111 & 1 \\
	 & 113 & 4 \\
	 & 114 & 1 \\
	 & 115 & 3 \\
	 & 116 & 2 \\ \hline
\end{array}
\begin{array}{c}\begin{array}{|c|c|c|}
\hline	n & m & \# \\ \hline
	 & 117 & 1 \\
	 & 118 & 3 \\
	 & 119 & 4 \\
	 & 120 & 1 \\
	23 & 122 & 3 \\
	 & 123 & 1 \\
	 & 124 & 1 \\
	 & 125 & 1 \\
	 & 128 & 2 \\ \hline
	 & 98 & 2 \\
	 & 99 & 2 \\
	 & 100 & 1 \\
	 & 102 & 1 \\
	 & 103 & 2 \\
	 & 104 & 3 \\
	 & 106 & 5 \\
	 & 108 & 7 \\
	 & 110 & 1 \\
	 & 113 & 5 \\
	 & 115 & 1 \\
	24 & 117 & 1 \\
	 & 118 & 2 \\
	 & 119 & 1 \\
	 & 123 & 2 \\
	 & 124 & 4 \\
	 & 126 & 3 \\
	 & 127 & 1 \\
	 & 128 & 1 \\
	 & 129 & 1 \\
	 & 130 & 4 \\
	 & 133 & 2 \\
	 & 134 & 3 \\
	 & 136 & 1 \\
	 & 144 & 1 \\ \hline
	 & 106 & 1 \\
	 & 114 & 1 \\
	25 & 121 & 1 \\
	 & 122 & 1 \\
	 & 125 & 1 \\
	 & 145 & 2 \\ \hline
	 & 117 & 1 \\
	 & 125 & 1 \\
	26 & 133 & 3 \\
	 & 141 & 1 \\
	 & 169 & 1 \\ \hline
	 & 122 & 1 \\
	 & 150 & 1 \\
	27 & 151 & 1 \\
	 & 169 & 1 \\
	 & 170 & 1 \\ \hline
	 & 134 & 1 \\
	28 & 162 & 1 \\
	 & 196 & 1 \\   \hline
	29 & 142 & 1 \\
	 & 197 & 1 \\ \hline
	 & 155 & 1 \\
	30 & 187 & 1 \\
	 & 211 & 1 \\
	 & 225 & 1 \\  \hline
\end{array}
\\
\begin{array}{ccc}
&&\\\vspace{.5cm}
\end{array}
\end{array}
}$$
\caption{\footnotesize The distribution of maximal graphs with rank 8 in terms of order $n$ and size $m$.}\label{tab:r8edge}
\end{table}

\begin{thebibliography}{}
\bibitem{ack}  S. Akbari, P.J. Cameron and   G.B. Khosrovshahi, Ranks and signatures of adjacency
matrices, manuscript (2004), available onlie at \url{http://www.maths.qmul.ac.uk/~lsoicher/designtheory.org/library/preprints/ranks.pdf}.
\bibitem{bbddm}  J.H. Bevis, K.K. Blount,  G.J. Davis, G.S. Domke and   V.A. Miller, The rank of a graph after vertex addition, {\em Linear Algebra Appl.} {\bf 265} (1997), 55--69.
\bibitem{ell} M.N. Ellingham, Basic subgraphs and graph spectra, {\em Australas. J. Combin.} {\bf8} (1993), 247--265.
\bibitem{gmt} E. Ghorbani, A. Mohammadian and B. Tayfeh-Rezaie,  Maximum order of trees and bipartite graphs with a given rank, {\em Discrete Math.} {\bf 312} (2012), 3498--3501.
\bibitem{gmt1}  E. Ghorbani, A. Mohammadian and  B. Tayfeh-Rezaie,  On order and rank of graphs, {\em Combinatorica} {\bf35} (2015), 655--668.
\bibitem{gmt2}  E. Ghorbani, A. Mohammadian and  B. Tayfeh-Rezaie, Maximum order of triangle-free graphs with a given rank, {\em J. Graph Theory} {\bf79} (2015), 145--158.
\bibitem{hp}  W.H. Haemers and   M.J.P. Peeters, The maximum order of adjacency matrices
with a given rank, {\em Des. Codes Cryptogr.} {\bf65} (2012), 223--232.
\bibitem{kl}  A. Kotlov  and   L. Lov\'asz, The rank  and size of graphs,  {\em J. Graph Theory}  {\bf23}  (1996),  185--189.
\bibitem{laz} M. Lazi\'c, Maximal canonical graphs with seven nonzero eigenvalues,
{\em Publ. Inst. Math. (Beograd) (N.S.)} {\bf88} (2010), 77--86.
\bibitem{lep} M. Lepovi\'c, Maximal canonical graphs with 6 nonzero eigenvalues,
{\em Glas. Mat. Ser. III} {\bf25(45)} (1990),  21--24.
\bibitem{mk} B.D. McKay, Combinatorial Data, \url{http://users.cecs.anu.edu.au/~bdm/data/}.
\bibitem{tor1} A. Torga\u{s}ev, On infinite graphs with three and four nonzero eigenvalues,
{\em Bull. Acad. Serbe Sci. Arts Cl. Sci. Math. Natur.}  {\bf11} (1981), 39--48.
\bibitem{tor2} A. Torga\u{s}ev, On infinite graphs with five nonzero eigenvalues,
{\em Bull. Acad. Serbe Sci. Arts Cl. Sci. Math. Natur.}  {\bf12} (1982), 31--38.
\end{thebibliography}
\end{document}